\newcommand{\algref}[1]{Algorithm~\ref{#1}}
\theoremstyle{plain}
\newtheorem{thm}{Theorem}[section]
\newtheorem{prop}[thm]{Proposition}
\newtheorem{lemma}[thm]{Lemma}
\newtheorem{coro}[thm]{Corollary}
\theoremstyle{definition}
\newtheorem{defn}[thm]{Definition}
\theoremstyle{definition}
\newtheorem{ex}[thm]{Example}
\theoremstyle{remark}
\newtheorem{rem}[thm]{Remark}
\theoremstyle{definition}
\newtheorem*{thm*}{Theorem}
\newtheorem*{ex*}{Example}
\newcommand{\defref}[1]{Definition~\ref{#1}}
\newcommand{\propref}[1]{Proposition~\ref{#1}}
\newcommand{\thmref}[1]{Theorem~\ref{#1}}
\newcommand{\lemmaref}[1]{Lemma~\ref{#1}}
\newcommand{\cororef}[1]{Corollary~\ref{#1}}
\newcommand{\exref}[1]{Example~\ref{#1}}
\newcommand{\secref}[1]{Section~\ref{#1}}
\newcommand{\remref}[1]{Remark~\ref{#1}}
\newcommand{\figref}[1]{Figure~\ref{#1}}
\newcommand{\NN}{\mathbb{N}}
\newcommand{\ZZ}{\mathbb{Z}}
\newcommand{\RR}{\mathbb{R}}
\newcommand{\CC}{\mathbb{C}}
\newcommand{\PP}{\mathbb{P}}
\newcommand{\cOO}{\xi}
\newcommand{\im}[1]{\operatorname{im} (#1)}
\newcommand{\codim}[1]{\operatorname{codim} (#1)}
\newcommand{\Bl}[2]{\operatorname{Bl}_{#1}(#2)}
\newcommand{\gr}[2]{\operatorname{Gr} (#1,#2)}
\newcommand{\BND}[1]{\operatorname{BND}(#1)}
\renewcommand{\dim}[1]{\operatorname{dim} (#1)}
\renewcommand{\deg}[1]{\operatorname{deg} (#1)}
\begin{document}

\title{The bottleneck degree of algebraic varieties}

\author{Sandra Di Rocco}
\address{Department of mathematics, KTH, 10044,
  Stockholm, Sweden}
\email{dirocco@math.kth.se}
\urladdr{https://people.kth.se/~dirocco/}

\author{David Eklund} \address{DTU Compute, Richard Petersens Plads,
  Building 321, DK-2800 Kgs. Lyngby, Denmark}
\email{daek@math.kth.se}
\urladdr{https://people.kth.se/~daek/}

\author{Madeleine Weinstein} \address{Department of Mathematics,
  University of California Berkeley, Evans Hall, Berkeley, CA 94720}
\email{maddie@math.berkeley.edu}
\urladdr{https://math.berkeley.edu/~maddie/}

\keywords{bottlenecks of varieties, algebraic geometry of data, reach
  of algebraic manifolds}
\subjclass[2010]{14Q20, 62-07, 68Q32, 65D18}

\begin{abstract}
A bottleneck of a smooth algebraic variety $X \subset \CC^n$ is a pair
$(x,y)$ of distinct points $x,y \in X$ such that the Euclidean normal
spaces at $x$ and $y$ contain the line spanned by $x$ and $y$. The
narrowness of bottlenecks is a fundamental complexity measure in the
algebraic geometry of data. In this paper we study the number of
bottlenecks of affine and projective varieties, which we call the
\emph{bottleneck degree}. The bottleneck degree is a measure of the
complexity of computing all bottlenecks of an algebraic variety, using
for example numerical homotopy methods. We show that the bottleneck
degree is a function of classical invariants such as Chern classes and
polar classes. We give the formula explicitly in low dimension and
provide an algorithm to compute it in the general case.
\end{abstract}

\maketitle

\section{Introduction} \label{sec:intro}

In this paper we study geometric properties of algebraic varieties
with applications to computational data science. Let $f_1,\dots,f_k
\in \RR[x_1,\dots,x_n]$ be polynomials. The associated algebraic
variety is the zero-set $X \subset \RR^n$ given by $X=\{x \in \RR^n:
f_1(x)=\dots=f_k(x)=0\}$. Polynomial systems of equations arise in
applications to natural science, engineering, computer science and
beyond. Examples include kinematics \cite{wampler_sommese_2011},
economics \cite{MCKELVEY1997411}, chemistry \cite{Minimair04},
computer vision \cite{KUKELOVA2010}, machine learning
\cite{Mehta2018TheLS} and optimization \cite{lasserre_2015}.
Polynomial systems can be analyzed naturally through the machinery of
algebraic geometry.  In the present study we concentrate on computing
and counting so-called \emph{bottlenecks} of an algebraic variety $X
\subset \RR^n$. This is the study of lines in $\RR^n$ orthogonal to
$X$ at two or more points. Such lines contribute to the computation of
the {\it reach}, see \secref{sec:reach}, and may be found by solving a
polynomial system (\ref{eq:realbn}). To be able to use the appropriate
tools from algebraic geometry we often have to move from the real
numbers to the algebraically closed field of complex numbers $\CC$, as
we illustrate below.  We will see that classical invariants such as
polar classes appear naturally and turn out to be essential to
obtaining a closed formula for the number of bottlenecks.  In our
opinion this work provides yet one more illustration that classical
algebraic geometry and in particular intersection theory are useful
and often necessary in applications such as data science.

\subsection{Bottlenecks and optimization}
Finding  lines orthogonal at two or more points is an optimization problem with algebraic
constraints. The focus of this paper is to determine, or bound, the
number of solutions to this optimization problem.

\begin{ex} \label{ex:ellipseintro}
Consider the ellipse $C \subset \RR^2$ defined by $f=x^2+y^2/2-1=0$. A
bottleneck on $C$ is a pair of points $p,q \in C$ that span a line
orthogonal to $C$ at both points. The only such lines are the $x$-axis
and the $y$-axis, that is the principal axes of the ellipse, see
\figref{fig:ellipseintro}.
\begin{figure}[ht]
  \centering
\begin{picture}(150,150)
  \put(0,0){\includegraphics[trim={0pt 0pt 0pt 0pt},clip,scale=0.4]{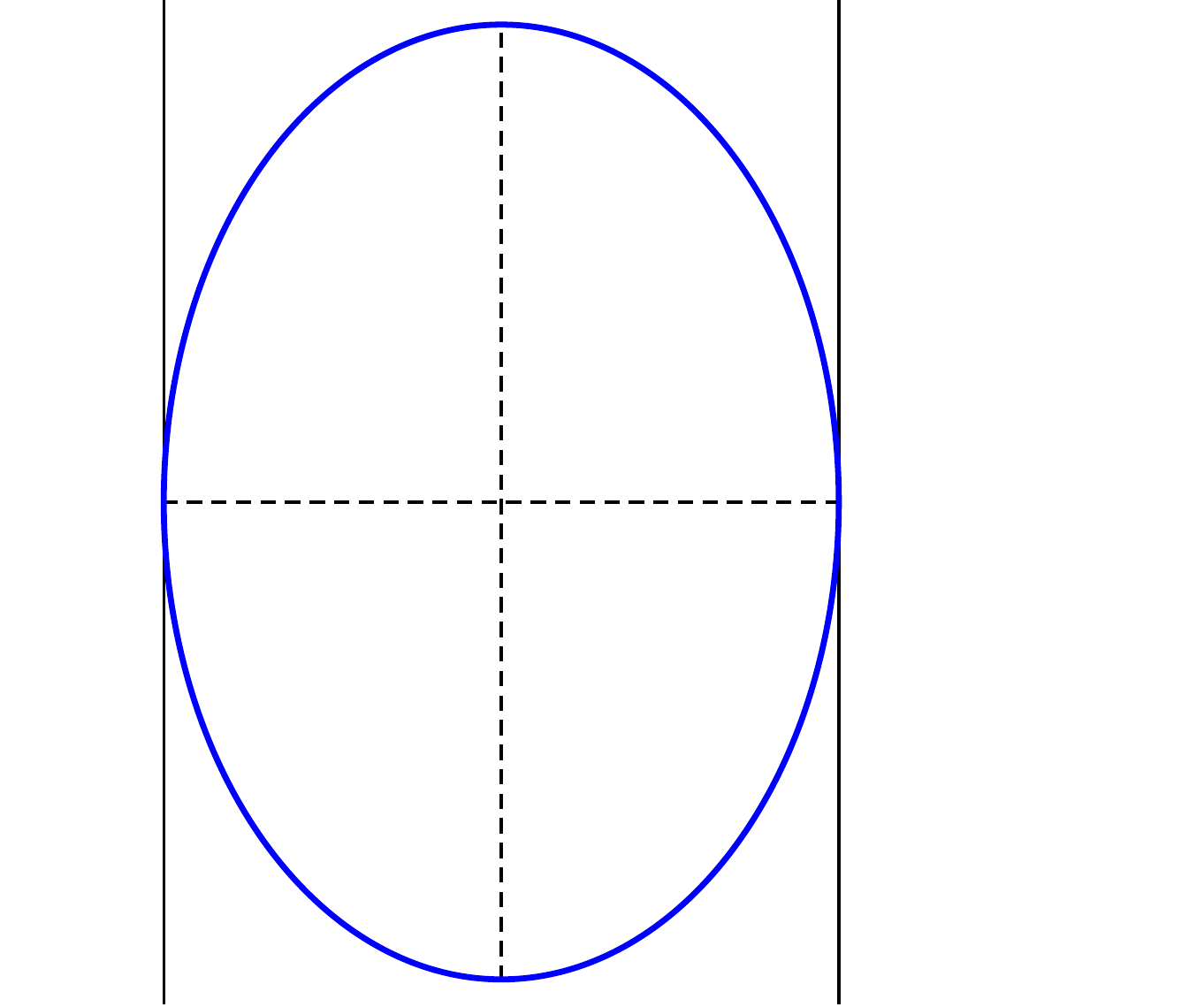}}
      \put(0,110){$T_pC$}
      \put(90,120){$C$}
      \put(115,110){$T_qC$}
      \put(10,65){$p$}
      \put(115,65){$q$}
    \end{picture}
\caption{An ellipse with tangent lines and principal axes.\label{fig:ellipseintro}}
\end{figure}
A line $l$ is orthogonal to $C$ at a point $p \in C$
if $l$ is orthogonal to the tangent line $T_pC$ at $p$. In
other words $l$ is the normal line $N_pX$ at $p$. The direction of the
normal line is given by the gradient $\nabla f=(2x,y)$. Consider a
pair of points $p=(x,y) \in C$ and $q=(z,w) \in C$.  The claim that
$(p,q)$ is a bottleneck may then be expressed as
\[
\begin{array}{lll}
x-z &=& \lambda 2x, \\
y-w &=& \lambda y, \\
x-z &=& \mu 2z, \\
y-w &=& \mu w,
\end{array}
\]
for some $\lambda, \mu \in \RR$. These equations, together with
$x^2+y^2/2=1$ and $z^2+w^2/2=1$, constitute a polynomial system for
computing bottlenecks on the curve $C$. Note that this is also the
system we get if we apply the Lagrange multiplier method to the
problem of optimizing the squared distance function $(x-z)^2+(y-w)^2$
subject to the constraints $x^2+y^2/2-1=z^2+w^2/2-1=0$. This is thus
an optimization problem and we are asking for the critical points of
the distance between pairs of points on $C$.
\end{ex}

Consider again an arbitrary variety $X \subset \RR^n$. For
convenience, we will restrict to the case where $X$ is \emph{smooth},
that is every point of $X$ is a manifold point. A line is orthogonal
to $X$ if it is orthogonal to the tangent space $T_xX \subset \RR^n$
at $x$.
\begin{defn}
  Let $X \subset \RR^n$ be a smooth variety. The bottlenecks of $X$
  are pairs $(x,y)$ of distinct points $x,y \in X$ such that the line
  spanned by $x$ and $y$ is normal to $X$ at both points.
\end{defn}
Equivalently  one can define bottlenecks as  the
critical points of the squared distance function
\begin{equation} \label{eq:sqdist}
  \RR^n \times \RR^n: (x,y) \mapsto ||x-y||^2,
  \end{equation}
subject to the constraints $x,y \in X$ as well as the non-triviality
condition $x\neq y$.

\begin{figure}[ht]
  \centering
\subfloat[][\label{fig:quarticcurve}]{
  \includegraphics[trim={10pt 10pt 10pt 10pt},clip,scale=0.15]{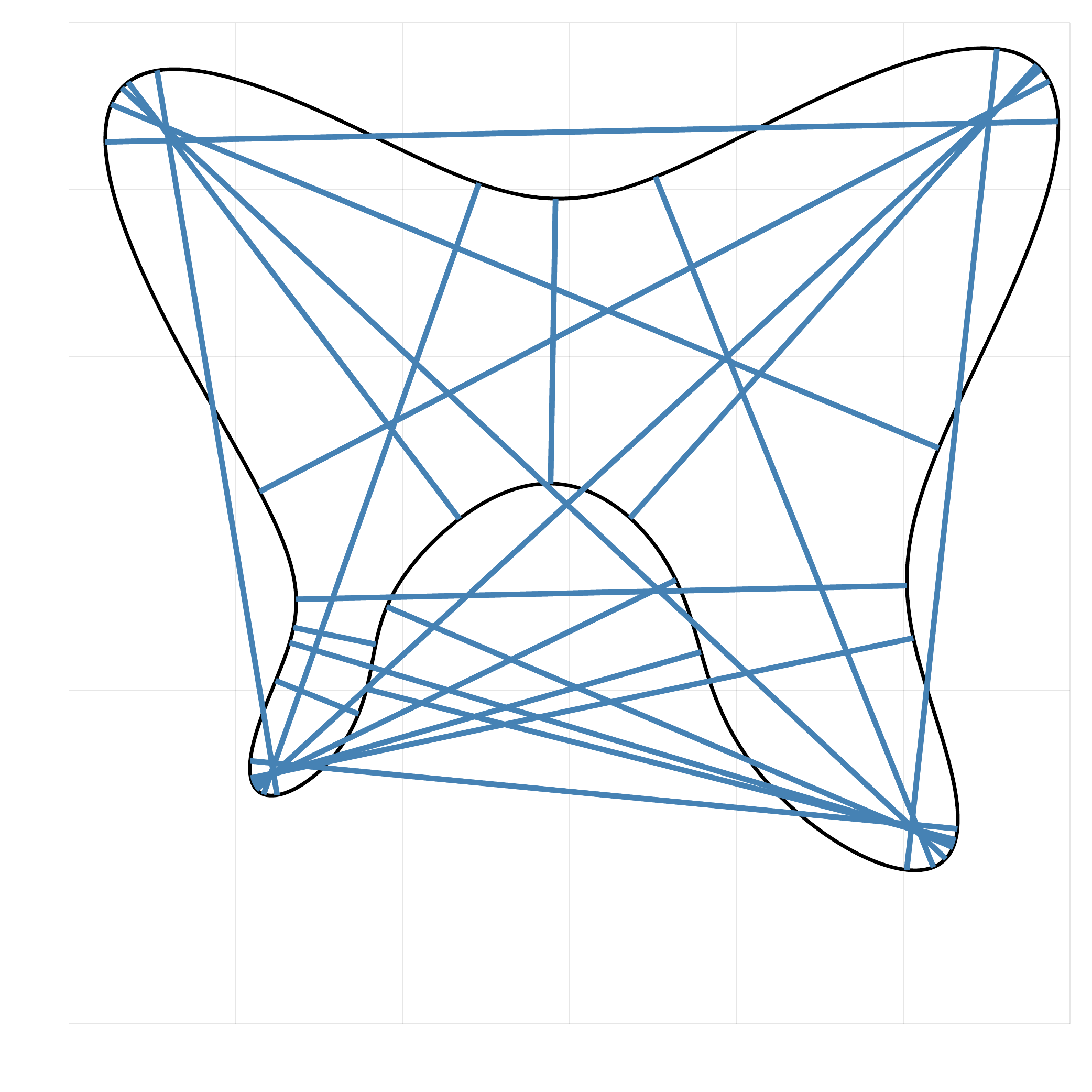}}
\qquad
\qquad
\subfloat[][\label{fig:realbncurve}]{
  \includegraphics[trim={10pt 10pt 10pt 10pt},clip,scale=0.3]{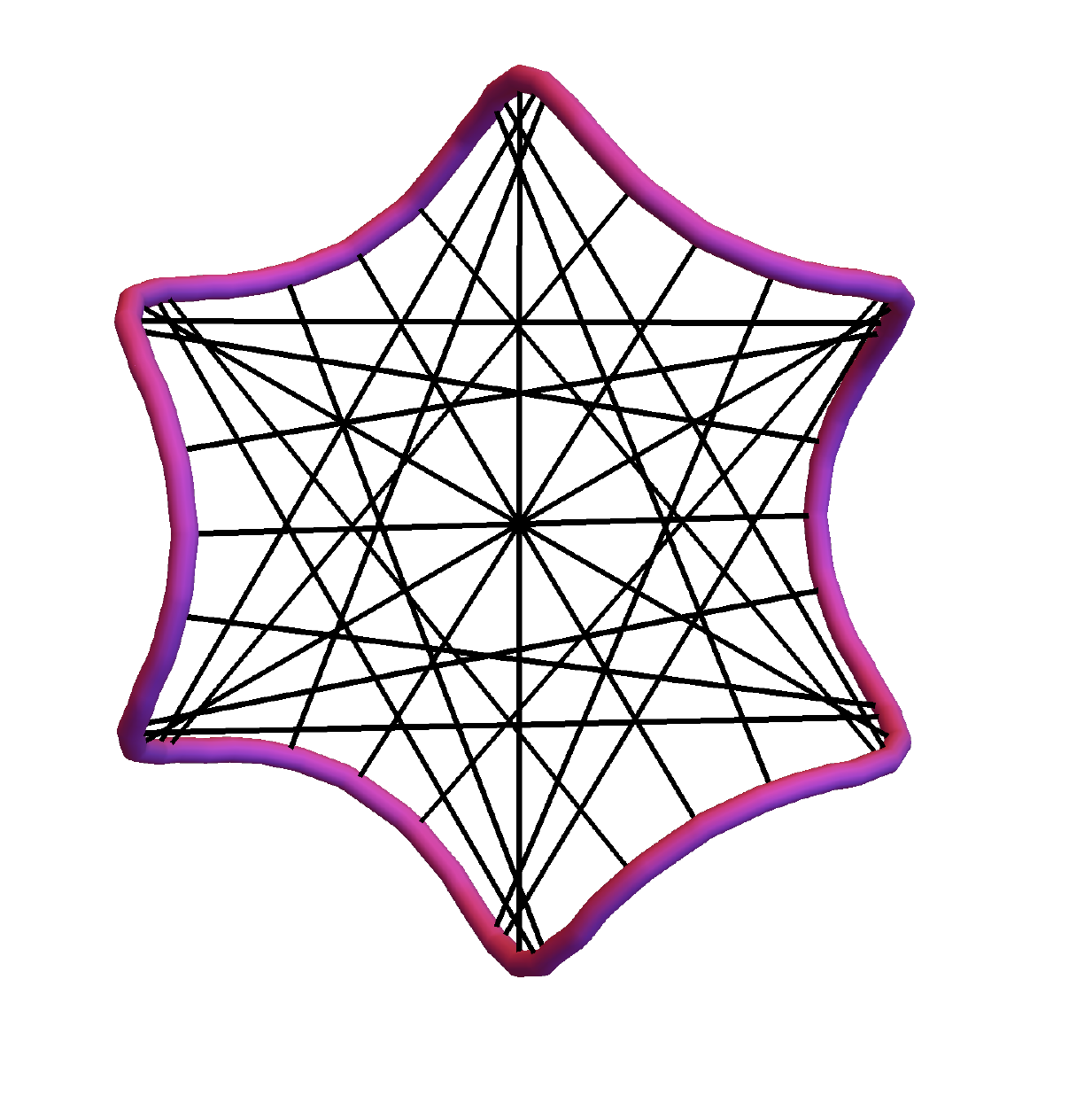}}
\caption{Two curves and their bottlenecks.}
\end{figure}

\begin{ex}
\figref{fig:quarticcurve} shows a quartic curve in $\RR^2$ and its 22
bottleneck lines. The curve is defined by $x^4 + y^4+1-4y -
x^2y^2-4x^2-x-2y^2=0$. The figure was produced by Paul Breiding and
Sascha Timme using the Julia package \emph{HomotopyContinuation.jl} \cite{BreidingTimme}.

As another example consider the space curve in $\RR^3$ defined by
   \[
 \begin{array}{c}
   x^3-3xy^2-z = 0,\\
   x^2 + y^2 + 3z^2 - 1 = 0.
 \end{array}
 \]
\figref{fig:realbncurve} shows this curve and its 24 bottleneck lines.

\end{ex}

\subsection{Motivation}\label{sec:reach}

The geometry of bottlenecks plays an important role in several aspects
of real geometry in connection with analysis of data with support on
an algebraic variety. Let $X_{\RR} \subset \RR^n$ be a smooth variety
which is non-empty and compact. An interesting observation is that the
distance between any two distinct connected components of $X_{\RR}$ is
realized by $||x-y||$ for some bottleneck $(x,y)$ with one point on
each component. See \figref{fig:components} for an illustration of
this. The narrowest bottleneck thus bounds the smallest distance
between any two connected components of $X_{\RR}$. This relates
bottlenecks and the so-called \emph{reach} $\tau_{\RR}$ of
$X_{\RR}$. The number $\tau_{X_{\RR}}$ can be defined as the maximal
distance $r \geq 0$ such that any point $p \in \RR^n$ at distance less
than $r$ from $X_{\RR}$ has a unique closest point on $X_{\RR}$.

The reach can be seen as a measure of curvature that extends to
subsets of $\RR^n$ which are not smooth manifolds; see \cite{F59} for
background and basic facts. The reach has many applications in the
area of manifold reconstruction \cite{ACKMRW17}. For example, it has
been applied to minimax rates of convergence in Hausdorff distance
\cite{Genovese2012, Kim2015TightMR, Aamari2018}. The reach has also
been applied to estimate boundary curve length and surface area
\cite{Cuevas2007ANA} as well as volume \cite{minimaxvolume}. Another
application is dimensionality reduction via random projections
\cite{B06}. In a number of papers \cite{BW09, C08, BHW08, V11} the
reach is used to bound the approximation error of such dimensionality
reduction techniques. This amounts to a generalization of the
Johnson-Lindenstrauss lemma \cite{DG03} to higher dimensional
manifolds. The reach is also used as input to standard algorithms for
manifold triangulation \cite{ACM05, BG11,BO05,Boissonnat2014}. Another
line of work seeks to compute {\it homology groups} of a manifold from
a finite sample using techniques from persistent homology
\cite{BKSW18, DEHH18, HW18, NSW08}. A point cloud on a torus is
illustrated in \figref{fig:components}. In this context, see also
\cite{balakrishnan2012} on minimax rates of homology inference. The
reach determines the sample size required to obtain the correct
homology of the associated complex. With these applications in mind it
would be useful to find efficient methods to compute the reach.

It is shown in \cite{ACKMRW17} that $\tau_{X_\RR}=\min \{\rho,b\}$
where $\rho$ is the minimal radius of curvature of a geodesic on
$X_{\RR}$ and $b$ is half the width of the narrowest bottleneck of
$X_{\RR}$. This suggests that $\tau_{X_{\RR}}$ could be computed with
numerical methods by computing $\rho$ and $b$ separately. See
\cite{reach-curve2019}, where this is carried out in detail for plane
curves. We conclude that efficient methods to compute bottlenecks play
a prominent role in the pursuit of efficient methods to compute the
reach itself.

\begin{figure}
    \centering
    \subfloat[Connected components]{{\includegraphics[trim={0pt 50pt 0pt 100pt},clip,width=150pt]{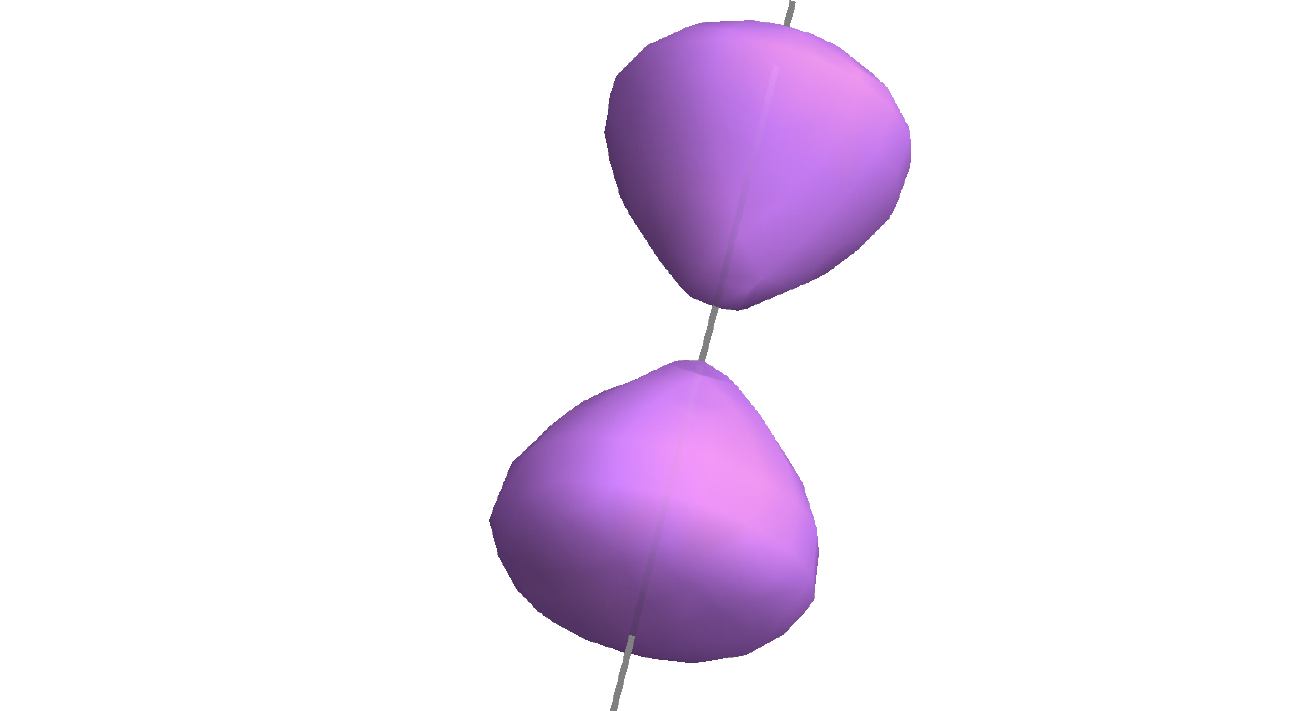} }}
    \qquad
    \subfloat[Sampling of a torus]{{\includegraphics[trim={0pt 50pt 0pt 100pt},clip,width=170pt]{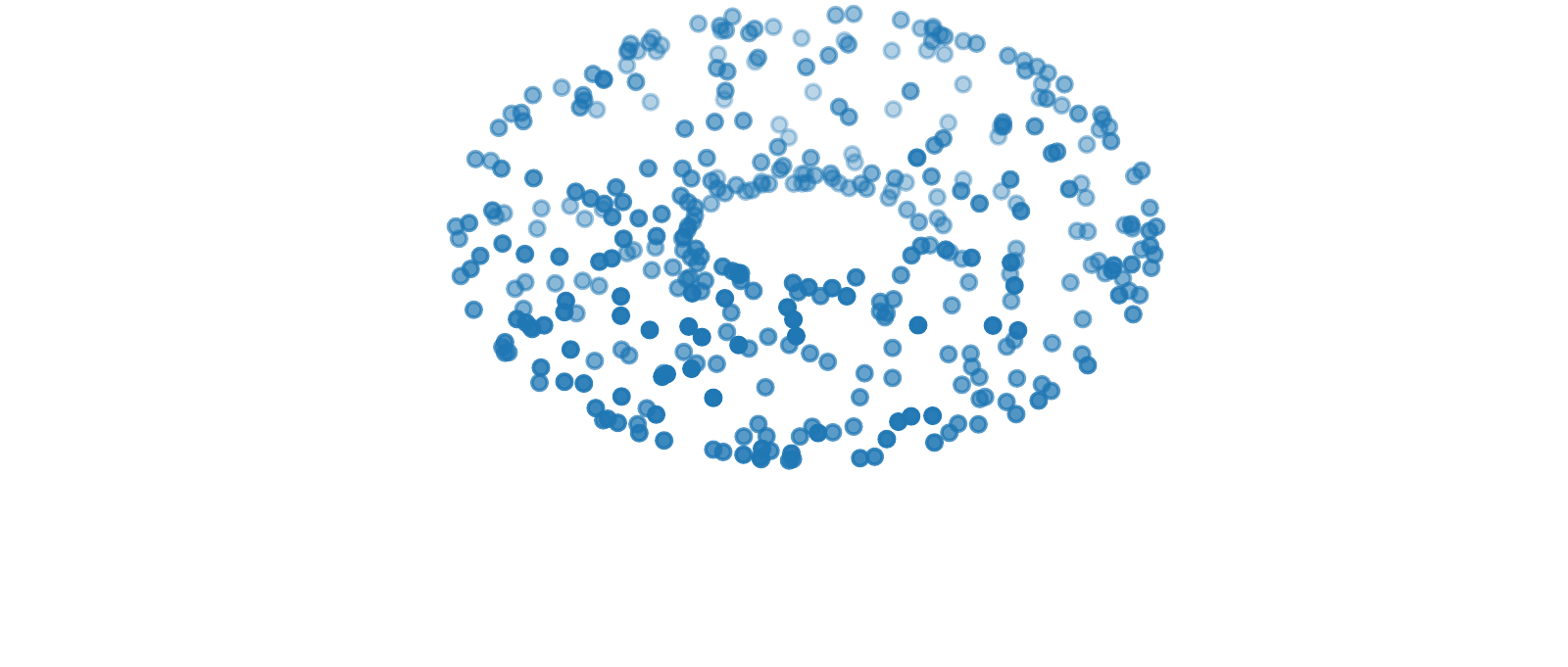} }}
    \caption{  }
    \label{fig:components}
\end{figure}

\subsection{Equations for bottlenecks}
We will now formulate a system of equations for bottlenecks that does
not introduce auxiliary variables as in the Lagrange multiplier
method. Both of these formulations are useful and the latter will be
developed further in \remref{rem:homotopy}.

Let $X \subset \RR^n$ be a smooth $m$-dimensional variety defined by
polynomials $f_1,\dots,f_k$. Note that for $x \in X$,
$\dim{T_xX}=\dim{X}=m$. Here we are considering the embedded tangent
space which passes through the point $x$. The corresponding linear
space through the origin is $(T_xX)_0=T_xX-x$. The orthogonal
complement $N_xX=\{z \in \RR^n: (z-x) \perp (T_xX)_0\}$ is the normal
space at $x$ and has the complementary dimension $n-m$. As in the case
of the ellipse in \exref{ex:ellipseintro}, the normal space is the
span of the gradients $\langle \nabla f_1,\dots, \nabla f_k
\rangle$. More precisely $N_xX=x+\langle \nabla f_1(x),\dots, \nabla
f_k(x) \rangle$. Now, if $x,y \in X$ are distinct then $(x,y)$ is a
bottleneck precisely when $(y-x) \in \langle \nabla f_1(x),\dots,
\nabla f_k(x) \rangle$ and $(y-x) \in \langle \nabla f_1(y),\dots,
\nabla f_k(y) \rangle$. To formulate the equations we define the
\emph{augmented Jacobian} to be the following matrix of size $(k+1)
\times n$:
\begin{equation} \label{eq:augmented}
 J(x,y) =
 \begin{bmatrix}
     y-x \\
     \nabla f_1(x) \\
     \vdots \\
     \nabla f_k(x)
 \end{bmatrix} ,
 \end{equation}
where $y-x$ is viewed as a row vector. The condition that $y-x$ is in
the span of $\nabla f_1(x),\dots,\nabla f_k(x)$ is equivalent to
saying that the matrix $J(x,y)$ has rank less than or equal to $n-m$,
or in other words that all $(n-m+1)\times (n-m+1)$-minors of $J(x,y)$
vanish. There is a similar rank condition given by the $(n-m+1) \times
(n-m+1)$-minors of the augmented Jacobian $J(y,x)$ with $x$ and $y$
reversed. In summary, the bottlenecks of $X$ are the non-trivial
($x\neq y$) solutions to the following system of equations:
\begin{equation} \label{eq:realbn}
  \begin{array}{l}
(n-m+1)\times (n-m+1)\text{-minors of }J(x,y)=0,\\
(n-m+1)\times (n-m+1)\text{-minors of }J(y,x)=0,\\
f_1(x)=\dots =f_k(x)=0,\\
f_1(y)=\dots=f_k(y)=0.
\end{array}
\end{equation}

\subsection{Counting roots, complex numbers and projective space} \label{sec:counting}

In this section we will motivate the study of complex and projective
bottlenecks. Let $f_1,\dots,f_k \in \RR[x_1,\dots,x_n]$ with
corresponding variety $X_{\RR}$. The system of equations
$f_1(x)=\dots=f_k(x)=0$ may have non-real solutions $x \in \CC^n$. The
complex solutions are very relevant for solving polynomial systems. We
can define a complex variety $X_{\CC} \subset \CC^n$ given by
$X_{\CC}=\{x \in \CC^n: f_1(x)=\dots=f_k(x)=0\}$. Note that $X_{\CC}$
contains the real solutions $X_{\RR} \subseteq X_{\CC}$.

As practitioners we need tools to numerically approximate solutions to
polynomial systems. A useful approach we would like to mention here is
numerical homotopy methods, see for example
\cite{SW05,bertini,BreidingTimme}. These are predictor/corrector
routines based on Newton's method but with probabilistic guarantees
that all complex isolated solutions will be found. If a system has
only finitely many solutions then the number of complex roots is an
upper bound on the number of real roots. A naive approach to finding
the real roots is of course to compute all complex roots and filter
out the real ones. We stress this point because it illustrates how the
number of complex bottlenecks (if finite) provides upper bounds on the
computational complexity of real bottlenecks. It is therefore natural
to explore the concept of bottlenecks in the complex setting even if
one is only interested in real solutions.

An alternative approach to homotopy methods is symbolic
computations  via Gr\"obner bases, see for example
\cite{Sturmfels02}. Whether homotopy methods or Gr\"obner bases is
appropriate depends on the particular system of equations at hand. See
\cite{Bates2014} for a comparison of numerical and symbolic methods
for equation solving.

Let $X \subset \CC^n$ be a smooth variety, defined by $f_1,\dots, f_k
\in \CC[x_1,\dots x_n],$ with $\dim{X}=m>0$. A \emph{bottleneck} of
$X$ is defined to be a pair of distinct points $x,y \in X$ such that
the line $\overline{xy}$ joining $x$ and $y$ is normal to $X$ at both
$x$ and $y$. The orthogonality relation $a\perp b$ involved in the
definition of bottlenecks is given by $\sum_{i=1}^na_ib_i=0$ for
$a=(a_1,\dots,a_n) \in \CC^n$ and $b=(b_1,\dots,b_n) \in \CC^n$. For a
point $x \in X$, let $(T_xX)_0$ denote the embedded tangent space of
$X$ translated to the origin. Then the \emph{Euclidean normal space}
of $X$ at $x$ is defined as $N_xX=\{z \in \CC^n:(z-x) \perp
(T_xX)_0\}$. A pair of distinct points $(x,y) \in X \times X$ is thus
a bottleneck exactly when $\overline{xy} \subseteq N_xX \cap
N_yX$. Note that this is the case if and only if $y \in N_xX$ and $x
\in N_yX$. The \emph{bottleneck variety} in $\CC^{2n}$ consists of the
bottlenecks of $X$ together with the diagonal $\{(x,y) \in X\times
X:x=y\} \subset \CC^n \times \CC^n$. Just as for real varieties, the
augmented Jacobian is defined by (\ref{eq:augmented}) and the system
(\ref{eq:realbn}) defines the bottleneck variety of $X$.

In a similar manner we will define bottlenecks for projective
varieties in complex projective space $\PP^n$. Recall that projective
space $\PP^n$ is obtained by gluing a hyperplane at infinity to the
affine space $\CC^n$. For example, the projective plane $\PP^2$ is the
complex plane $\CC^2$ with an added line at infinity.

Counting the number of roots to a system of polynomials is a highly
challenging problem. The simplest case is counting roots in
$\PP^n$. Counting roots in $\CC^n$ is harder and even harder is to
count real roots. Consider for example a univariate polynomial $f \in
\RR[x]$ of degree $d$. In this case there are always $d$ complex roots
counted with multiplicity while the number of real roots depends on
the coefficients of $f$. Consider now the next step of two equations
$f_1,f_2 \in \RR[x_1,x_2]$ of degrees $d_1$ and $d_2$ and the
corresponding intersection of two curves in $\CC^2$. If the
intersection is finite there can be at most $d_1d_2$ complex
solutions. This is also the number of roots for almost all $f_1$ and
$f_2$ of degrees $d_1$ and $d_2$. In the case of real curves in
$\RR^2$ there is no such generic root count. Also, the number of
complex intersection points may be smaller than $d_1d_2$ as
illustrated by the example of two disjoint lines defined by $x_1=0$
and $x_1=1$. In contrast, the intersection of two curves in $\PP^2$ of
degrees $d_1$ and $d_2$ with finite intersection always consists of
$d_1d_2$ points counted with multiplicity. This fact is known as
B\'ezout's theorem and it can be generalized to a system of $n$
equations in $n$ variables \cite[Proposition 8.4]{F98}. This might
seem to solve the problem, at least in $\PP^n$. However, the system we
want to solve might be \emph{overdetermined} and have \emph{excess
  components} of higher dimension. Both of these complications are
present in the system \eqref{eq:realbn}. The excess component in this
case consists of the discarded trivial solutions on the diagonal
$\{(x,y) \in \CC^n \times \CC^n: x=y\}$. Amazingly, intersection
theory provides tools to deal with these issues under certain
circumstances. These tools are however often confined to the complex
projective setting.

Bottlenecks for projective varieties turn out to be essential for
counting bottlenecks on affine varieties. In fact, in
\propref{prop:affine} we reduce the affine case to the projective case
by considering bottlenecks at infinity.

\subsection{Polar geometry}

Consider the ellipse $C$ defined by $x^2+y^2/2=1$ in
\exref{ex:ellipseintro}. For a point $p \in \RR^2$ outside the region
bounded by $C$ there are exactly two lines through $p$ tangent to
$C$. The two tangent points $x,y \in C$ define what we call the first
polar locus $P_1(X,p)=\{x,y\}$, see \figref{fig:curvepolar}. The polar
locus depends on the choice of $p$ but two polar loci $P_1(X,p)$ and
$P_1(X,p')$ can be seen as deformations of each other by letting $p'$
approach $p$ along a curve. In this sense, the polar loci all
represent the same \emph{polar class} $p_1$ on $C$.

\begin{figure}[ht]
  \centering
  \begin{picture}(160,160)
  \put(0,0){\includegraphics[trim={0pt 0pt 0pt 0pt},clip,scale=0.3]{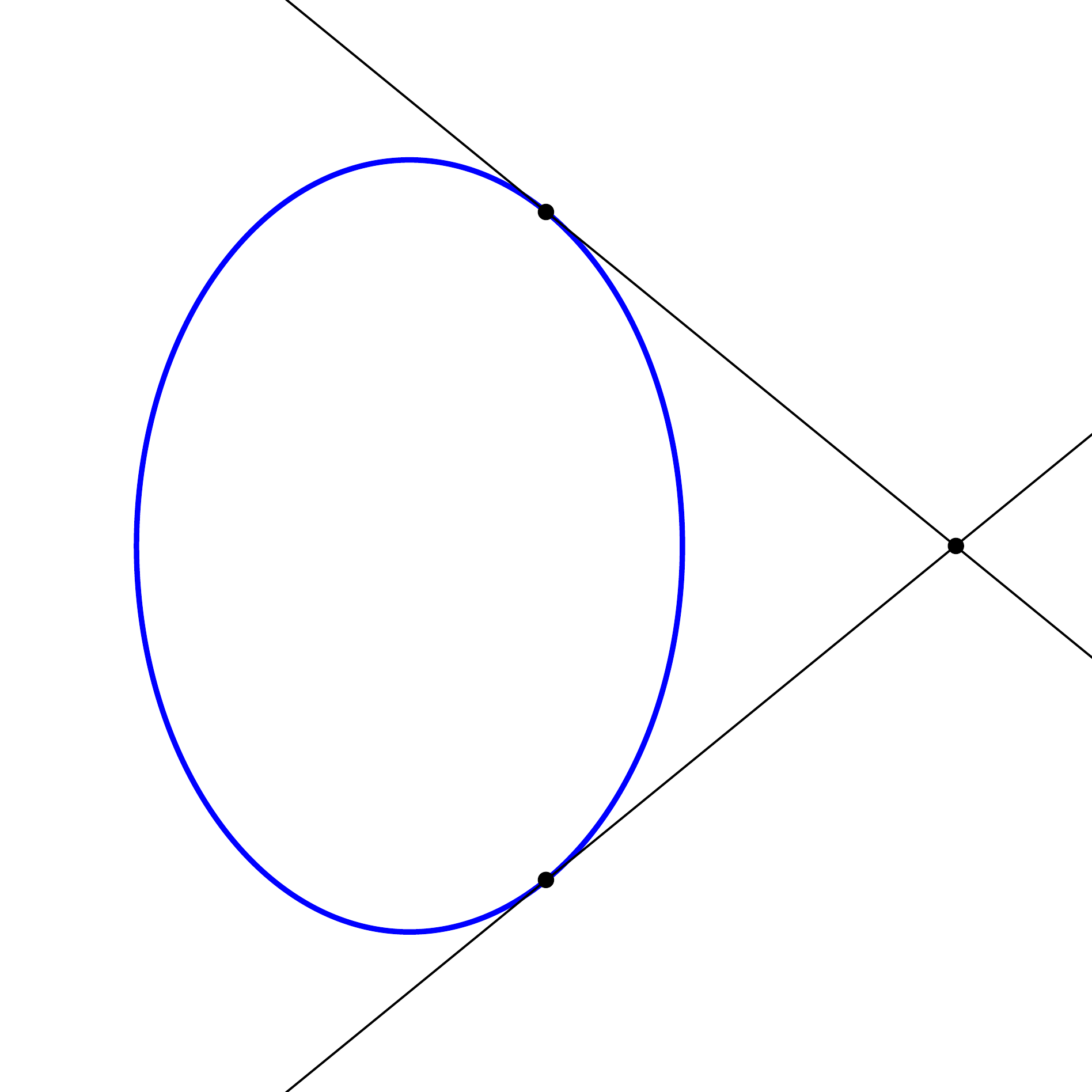}}
      \put(10,110){$C$}
      \put(85,135){$x$}
      \put(115,110){$T_xC$}
      \put(115,45){$T_yC$}
      \put(85,25){$y$}
      \put(140,70){$p$}
    \end{picture}
\caption{Polar locus of an ellipse.\label{fig:curvepolar}}
\end{figure}

Polar loci, also known as polar varieties, can be generalized to
varieties of higher dimension and play an important role in
applications of non-linear algebra. Examples include real equation
solving \cite{Bank2001}, computational complexity
\cite{Burgisser2007}, computing invariants
\cite{DiRocco2011,BATES2013493,EJP,DEP2017}, Euclidean distance degree
\cite{DHOS16} and optimization \cite{SafeyElDin}.

In this paper we use polar varieties to count bottlenecks. This is
done in the complex projective setting. For a smooth projective
variety $X \subset \PP^n$ polar loci are defined using the projective
tangent space $\mathbb{T}_xX \subset \PP^n$ at points $x \in
X$. Consider first the case where $X$ is a smooth hypersurface defined
by a homogeneous polynomial $f \in \CC[x_0,\dots,x_n]$ and let $x \in
X$. Then the hyperplane $\mathbb{T}_xX \subset \PP^n$ is defined by
the equation $\sum_{i=0}^n x_i\frac{\partial f}{\partial
  x_i}(x)=0$. In general, if $X$ is a smooth variety defined by an
ideal generated by homogeneous polynomials $f_1,\dots,f_k \in
\CC[x_0,\dots,x_n]$, then $\mathbb{T}_xX \subseteq \PP^n$ is the
subspace defined by the kernel of the Jacobian matrix
$\{\frac{\partial f_i}{\partial x_j}(x)\}_{i,j}$.

For a smooth surface $X \subset \PP^3$ we have two polar
varieties. Let $p \in \PP^3$ be a general point and $l \subset \PP^3$
a general line. Then $P_1(X,p)$ is the set of points $x$ such that the
projective tangent plane $\mathbb{T}_xX \subset \PP^3$ contains
$p$. This is a curve on $X$. Similarly, $P_2(X,l)=\{x \in X:l
\subseteq \mathbb{T}_xX\}$, which is finite. We also let
$P_0(X)=X$. More generally, an $m$-dimensional variety has $m+1$ polar
varieties defined by exceptional tangent loci as follows. Let $X
\subset \PP^n$ be a smooth variety of dimension $m$. For $j=0,\dots,m$
and a general linear space $V \subseteq \PP^n$ of dimension $n-m-2+j$
we define the polar locus \[P_j(X,V)=\{x \in X: \dim{\mathbb{T}_xX
  \cap V} \geq j-1\}.\] If $X$ has codimension $1$ and $j=0$, then $V$
is the empty set using the convention $\dim{\emptyset}=-1$. By
\cite{F98} Example 14.4.15, $P_j(X,V)$ is either empty or of pure
codimension $j$.

In order to link bottlenecks and polar varieties we employ the tools
of intersection theory and pass from polar varieties to polar
classes. For each polar variety $P_j(X,V)$ there is a corresponding polar
class $[P_j(X,V)]=p_j$ which represents $P_j(X,V)$ up to
\emph{rational equivalence}. For example, $P_j(X,V)$ represents the
same polar class $p_j,$ independently of the general choice of linear
space $V$. In a similar manner, any subvariety $Z \subset \PP^n$ has a
corresponding rational equivalence class $[Z]$. We refer to Fulton's
book \cite{F98} for background on intersection theory. For more
details on polar classes see for example \cite{P78,P15} and
\cite[Example 14.4.15]{F98}. An important point is that there is a
well defined multiplication of polar classes corresponding to
intersection of polar varieties. This means that $p_ip_j=[P_i(X,V)
  \cap P_j(X,W)]$ for $0 \leq i,j \leq m$. Here $V \subset \PP^n$ and
$W \subset \PP^n$ are general linear spaces of dimension $n-m-2+i$ and
$n-m-2+j$, respectively. To express the number of bottlenecks of a
variety in terms of polar classes we also need the notion of {\it
  degree of a class}. If $Z \subset \PP^n$ is a subvariety, $\deg{Z}$
is the number of points of $Z\cap L$, where $L \subset \PP^n$ is a
general linear space of dimension $n-\dim{Z}$. For the class $[Z]$ we
let $\deg{[Z]}=\deg{Z}$.

\subsection{Results}
Let $X \subset \CC^n$ be a smooth variety and consider the closure
$\bar{X} \subset \PP^n$ in projective space. For the purpose of
counting bottlenecks we introduce the \emph{bottleneck degree} of an
algebraic variety. Under suitable genericity assumptions (see
\defref{def:bnregular}), the bottleneck degree coincides with the
number of bottlenecks.

The orthogonality relation on $\PP^n$ is defined via the
\emph{isotropic quadric} $Q \subset \PP^n$ given in homogeneous
coordinates by $\sum_0^n x_i^2=0$. Varieties which are tangent to $Q$
are to be considered degenerate in this context and we say that a
smooth projective variety is in \emph{general position} if it
intersects $Q$ transversely.

Our main result, \thmref{thm:misc}, is a proof that the bottleneck
degree of a smooth variety $\bar{X} \subset \PP^n$ in general position
can be computed via the polar classes $p_0,\dots,p_m.$ The arguments
in the proof directly give an algorithm for expressing the bottleneck
degree in terms of polar classes. We have implemented this algorithm
in Macaulay2 \cite{M2} and the script is available at
\cite{bnscript}. We give the formula for projective curves, surfaces
and threefolds, with the following notation: $h$ denotes the
hyperplane class in the intersection ring of $\bar{X}$,
$d=\deg{\bar{X}}$ and $\epsilon_i=\sum_{j=0}^{m-i} \deg{p_j}$. We also
use $\BND{\bar{X}}$ to denote the bottleneck degree of $\bar{X}$.
\begin{itemize}
\item[] Curves in $\PP^2$: \[\BND{\bar{X}}=d^4-4d^2+3d.\]
\item[] Curves in $\PP^3$: \[\BND{\bar{X}} = \epsilon_0^2 + d^2-\deg{2h+5p_1}.\]
\item[] Surfaces in $\PP^5$: \[\BND{\bar{X}} = \epsilon_0^2 + \epsilon_1^2 + d^2-\deg{3h^2+6hp_1+12p_1^2+p_2}.\]
\item[] Threefolds in $\PP^7$: \[\BND{\bar{X}} = \epsilon_0^2 + \epsilon_1^2 + \epsilon_2^2+d^2-\deg{4h^3+11h^2p_1+4hp_1^2+24p_1^3+2hp_2-12p_1p_2+17p_3}.\]
\end{itemize}
Notice that $\epsilon_0=\deg{p_0}+\dots+\deg{p_m}$ is equal to the
Euclidean Distance Degree of the variety.

Now consider the smooth affine variety $X \subset \CC^n \subset \PP^n$
and let $H_{\infty} = \PP^n \setminus \CC^n$ be the hyperplane at
infinity. The formulas for projective varieties above have to be
modified to yield the bottleneck degree $\BND{X}$ of the affine
variety $X$. Namely, there is a contribution to $\BND{\bar{X}}$ from
the hyperplane section $X \cap H_{\infty}$ at infinity. More
precisely, we show in \propref{prop:affine}
that \[\BND{X}=\BND{\bar{X}}-\BND{\bar{X} \cap H_{\infty}}.\] Here we
have assumed that $X \subset \CC^n$ is in general position in the
following sense: $\bar{X}$ and $X \cap H_{\infty}$ are smooth and in
general position. In the case of a plane curve $X \subset \CC^2$ in
general position the hyperplane section $\bar{X}\cap H_{\infty}$
consists of $d$ points on the line at infinity. This results in
\begin{equation} \label{eq:introaffinecurve}
\BND{X}=d^4-4d^2+3d-d(d-1)=d^4-5d^2+4d.
\end{equation}

We end this introduction with an example illustrating the above
formula for affine curves $X \subset \CC^2$. Before looking at a
concrete example it is worth pointing out that by convention
bottlenecks are counted as ordered pairs $(x,y) \in X \times X$. Since
$(y,x)$ is also a bottleneck if $(x,y)$ is a bottleneck, each
unordered bottleneck pair contributes twice to the bottleneck degree.
\begin{ex*}
Consider the Trott curve $X \subset \mathbb{C}^2$ defined by the equation
\[ 144(x_1^4+x_2^4)-225(x_1^2+x_2^2)+350x_1^2x_2^2+81 .\]
\begin{figure}[htb]
  \centering
  \includegraphics[trim={0pt 0pt 0pt 30pt},clip,width=130pt]{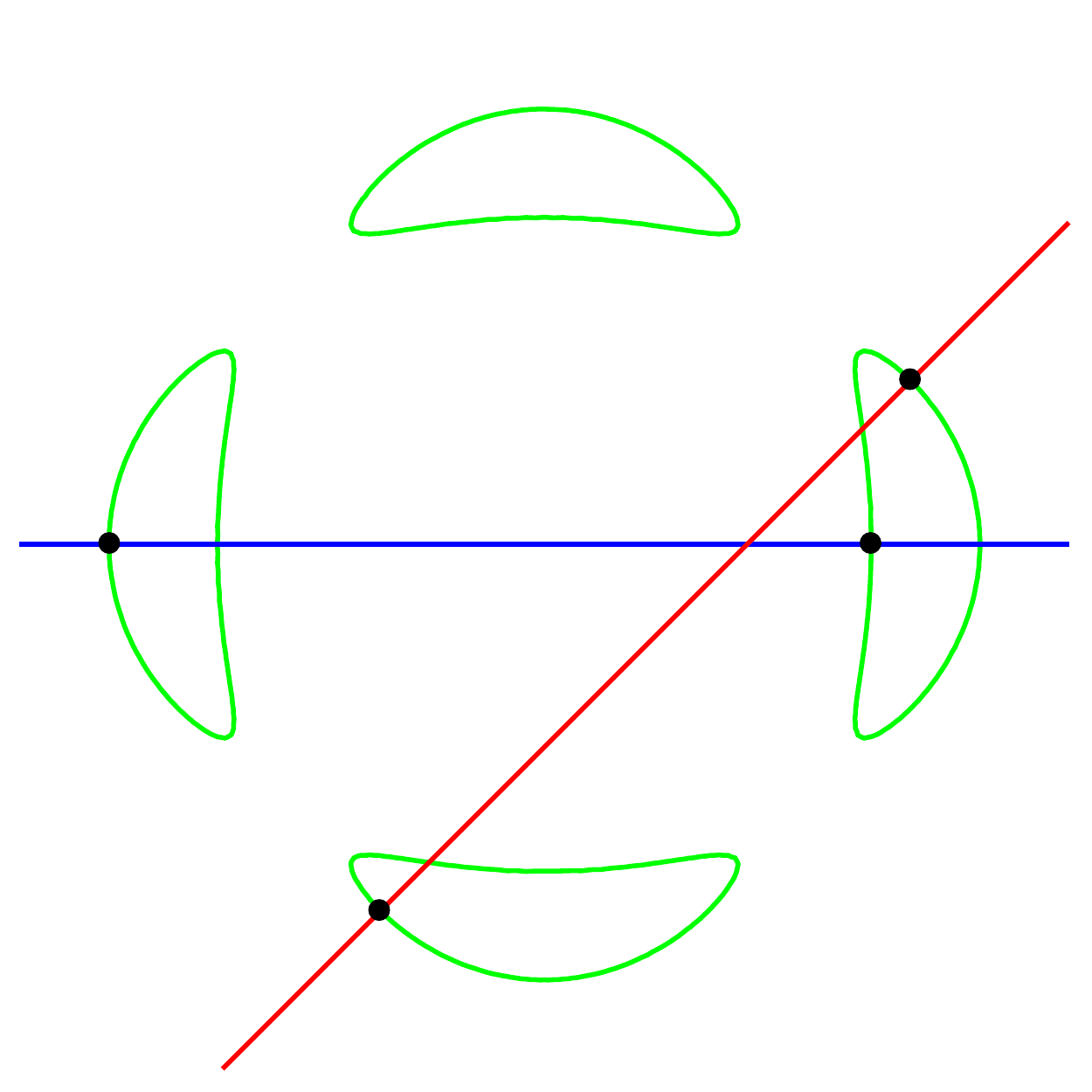}
  \caption{\label{fig:trottcurve} The quartic Trott curve depicted
    with two bottleneck pairs and their corresponding normal lines.}
\end{figure}
This nonsingular quartic curve is notable because all 28 bitangents
are real.

The bottleneck pairs $\{ (x_1,x_2), (y_1,y_2) \}$ are the off-diagonal
solutions to the following set of four equations, which are the
equations of the bottleneck ideal described in (\ref{eq:realbn}). The
first two imply that each point is on the curve and the second two
imply that each point is on the normal line to the curve at the other
point:
\begin{gather*}
144(x_1^4+x_2^4)-225(x_1^2+x_2^2)+350x_1^2x_2^2+81=0 \\
144(y_1^4+y_2^4)-225(y_1^2+y_2^2)+350y_1^2y_2^2+81=0 \\ 
x_1(-576x_1^2-700x_2^2+450)(y_2-x_2)=x_2(576x_2^2+700x_1^2-450)(x_1-y_1) \\
y_1(-576y_1^2-700y_2^2+450)(x_2-y_2)=y_2(576y_2^2+700y_1^2-450)(y_1-x_1). 
\end{gather*}
For a general enough affine plane curve of degree 4,
(\ref{eq:introaffinecurve}) gives a bottleneck degree of 192. This is
in fact the number of bottlenecks of the Trott curve. It was verified
in Macaulay2 by creating the ideal of the four equations above and
then saturating to remove the diagonal.

In this example, the 192 bottlenecks correspond to 192/2=96 bottleneck
pairs.  In particular, the real part of the Trott curve intersects the
$x$- and $y$-axis each 4 times and in each case the relevant axis is
the normal line to the curve at the intersection, leading to six
bottleneck pairs on each axis.
\end{ex*}

The paper is naturally divided between the treatment of the projective
case (Section \ref{proj}) and the affine case (Section
\ref{affine}). Section \ref{sec:examples} provides a small library of
examples.

\subsection{Acknowledgements}
This work started at KTH, Stockholm and was concluded during the three
authors' stay at ICERM, Providence. We are very grateful to ICERM for
generous financial support and for providing a stimulating working
environment. The first two authors' work and the work-visits to KTH
were supported by a VR grant [NT:2014-4763]. The second author was
supported in part by a research grant (15334) from VILLUM
FONDEN. This material is based upon work supported by the National Science Foundation Graduate Research Fellowship Program under Grant No. DGE 1752814. Any opinions, findings, and conclusions or recommendations expressed in this material are those of the authors and do not
necessarily reflect the views of the National Science Foundation. Mathematica \cite{WolframMathematica} was used for
experimentation and creating images. We would like to thank the
referees for good suggestions and valuable comments.

\section{Projective varieties}\label{proj}

\subsection{Notation and background in intersection theory} \label{sec:notation}
Below we introduce the Chow group of a subscheme of complex projective
space $\PP^n$ and present the double point formula from
intersection theory. The reason for considering schemes and not only
algebraic varieties is that isolated bottlenecks are counted with
multiplicity and similar considerations should be made for higher
dimensional bottleneck components. Specifically, the double point
class defined below is a push forward of the double point scheme and
the latter carries multiplicity information. In the end we only study
bottlenecks on algebraic varieties and little is lost if the reader
wishes to think of varieties in place of schemes.

The notation used in this paper will closely follow that of Fulton's
book \cite{F98}. Let $X \subseteq \PP^n$ be a closed
$m$-dimensional subscheme. We use $A_k(X)$ to denote the group of
$k$-cycles on $X$ up to rational equivalence and $A_*(X) =
\bigoplus_{k=0}^mA_k(X)$ denotes the Chow group of $X$. For a
subscheme $Z \subseteq X$ we have an associated cycle class $[Z] \in
A_*(X)$. Also, for a zero cycle class $\alpha \in A_0(X)$ we have the
notion of degree, denoted $\deg{\alpha}$, which counts the number of
points with multiplicity of a 0-cycle representing $\alpha$.

Suppose now that $X \subseteq \PP^n$ is a smooth variety of dimension
$m$. In this case we will also consider the intersection product on
$A_*(X)$ which makes it into a ring. For $\alpha,\beta \in A_*(X)$ we
denote their intersection product by $\alpha \beta$ or $\alpha \cdot
\beta$. Now let $\alpha \in A_k(X)$ with $k>0$ and consider the
hyperplane class $h \in A_{m-1}(X)$ induced by the embedding $X
\subseteq \PP^n$. In this paper, we define
$\deg{\alpha}=\deg{h^k\alpha}$. This means that if $\alpha$ is
represented by a subvariety $Z \subseteq X$, then $\deg{\alpha}$ is
the degree of $Z$. For a cycle class $\alpha \in A_*(X)$, we will use
$(\alpha)_k$ to denote the homogeneous piece of $\alpha$ of
codimension $k$, that is $(\alpha)_k$ is the projection of $\alpha$ to
$A_{m-k}(X)$. Finally, for $i=0,\dots,m$, $c_i(T_X)$ denotes the
$i$-th Chern class of the tangent bundle of $X$ and
$c(T_X)=c_0(T_X)+\dots +c_m(T_X)$ denotes the total Chern class.

Now let $X$ and $Y$ be subschemes of projective space. A map $f:X \to
Y$ gives rise to a push forward group homomorphism $f_*:A_*(X) \to
A_*(Y)$ and if $X$ and $Y$ are smooth varieties we also have a
pull-back ring homomorphism $f^*:A_*(Y) \to A_*(X)$.

Let $f:X \to Y$ be a morphism of smooth projective varieties. Let $x
\in A_k(X)$, $y \in A_l(Y)$ satisfy $k+l=\dim{Y}$. By the projection
formula \cite[Proposition 8.3 (c)]{F98}, $f_*(f^*(y)\cdot x) = y \cdot
f_*(x)$. In particular $\deg{y\cdot f_*(x)}=\deg{f_*(f^*(y)\cdot
  x)}=\deg{f^*(y)\cdot x}$. This relation is used many times in
the sequel.

Now let $f:X \to Y$ be a map of smooth projective varieties with
$\dim{X}=k$ and $\dim{Y}=2k$. Let $f \times f:X \times X \to Y \times
Y$ be the induced map, let $\Bl{\Delta_X}{X\times X}$ be the blow-up
of $X \times X$ along the diagonal $\Delta_X \subset X \times X$ and
let $bl:\Bl{\Delta_X}{X\times X} \to X \times X$ be the blow-up
map. Consider the map $h=(f\times f) \circ bl:\Bl{\Delta_X}{X\times
  X} \to Y \times Y$ and the inverse image scheme $h^{-1}(\Delta_Y)$
of the diagonal $\Delta_Y \subset Y \times Y$. Then the exceptional
divisor $bl^{-1}(\Delta_X)$ is a subscheme of $h^{-1}(\Delta_Y)$ and
its residual scheme in $h^{-1}(\Delta_Y)$ is called the \emph{double
  point scheme} of $f$ and is denoted $\tilde{D}(f)$. The exceptional
divisor $bl^{-1}(\Delta_X)$ may be interpreted as the projectivized
tangent bundle $\PP(T_X)$. The support of the double point scheme
$\tilde{D}(f)$ consists of the pairs of distinct points $(x,y) \in
X\times X \subset \Bl{\Delta_X}{X \times X} \setminus
bl^{-1}(\Delta_X)$ such that $f(x)=f(y)$ together with the tangent
directions in $\PP(T_X)$ where the differential $df:T_X \rightarrow T_Y$
vanishes, see \cite[Remark 14]{L78}. There is also an associated
residual intersection class $\bar{\mathbb{D}}(f) \in
A_0(\tilde{D}(f))$ defined in \cite[Theorem 9.2]{F98}. If
$\tilde{D}(f)$ has dimension 0, as expected, then $\bar{\mathbb{D}}(f)
= [\tilde{D}(f)]$. Let $\eta:\tilde{D}(f) \to X$ be the map induced by
$bl$ and the projection $X \times X \to X$ onto the first factor. Then
the \emph{double point class} $\mathbb{D}(f) \in A_0(X)$ is defined by
$\mathbb{D}(f)=\eta_*(\bar{\mathbb{D}}(f))$. By the double point
formula, \cite[Theorem 9.3]{F98}, \[\mathbb{D}(f) =
f^*f_*[X]-(c(f^*T_{Y})c(T_X)^{-1})_k.\]

\subsection{The conormal variety}

Let $X \subset \PP^n$ be a smooth variety of dimension $m$. Recall
that $H^0(X, {\mathcal O}_X(1))\cong \CC^{n+1}$ is the vector space
parameterizing the hyperplane sections of the embedding $X \subseteq
\PP^n \cong \PP(H^0(X, {\mathcal O}_X(1))).$ Consider the surjective
linear map: \[{\rm jet}_x: H^0(X, {\mathcal O}_X(1))\to H^0( {\mathcal
  O}_X(1)\otimes {\mathcal O}_X/m_x^2)\cong\CC^{m+1},\] where $m_x$ is
the maximal ideal at $x.$ Roughly speaking this map assigns to a
global section $s$ the $(m+1)$-tuple $(s(x), \ldots, \frac{\partial
  s}{\partial x_i}(x),\ldots),$ where $(x_1,\ldots,x_m)$ is a system
of coordinates around $x.$ We also have that
\[{\mathbb T}_xX=\PP(\im{{\rm jet}_x})\cong\PP^m.\]
Let $N_{X/\PP^n}$ be the normal bundle of $X$ in $\PP^n$ and let
$N_{X/\PP^n}^\vee$ be its dual.  The fibers of the dual normal bundle
at $x$ are given by the kernel of the map ${\rm jet}_x:$ $\ker({\rm
  jet}_x)\cong {N_{X/\PP^n}^\vee}_x \otimes {\mathcal O}_X(1)_x.$ The
projective tangent spaces at points $x\in X$ glue together to form the
first jet bundle ${\mathbb J}$ with fiber  ${\mathbb J}_x=H^0( {\mathcal O}_X(1)\otimes
{\mathcal O}_X/m_x^2),$ inducing the exact sequence of vector bundles:
\begin{equation}\label{eq:jet}
0\to N_{X/\PP^n}^\vee \otimes {\mathcal O}_X(1)\to X\times H^0( {\mathcal O}_X(1))\to {\mathbb J}\to 0
\end{equation}

The projectivized bundle of the conormal bundle is called the {\it
  conormal variety}:
$$\mathcal{C}_X=\PP(N_{X/\PP^n}^{\vee})\cong \PP(N_{X/\PP^n}^\vee
\otimes {\mathcal O}_X(1))\subset \PP^n \times (\PP^n)^*$$ where
$\PP(N_{X/\PP^n}^{\vee})$ denotes the projectivized conormal bundle of
$X$ in $\PP^n,$ see \cite[Example 3.2.21]{F98} for more details.  From
the exact sequence (\ref{eq:jet}) it follows that the conormal variety
consists of pairs of points $x \in X$ and hyperplanes in $\PP^n$ that
contain the projective tangent space ${\mathbb T}_xX$.

\subsection{Bottleneck degree}

Let $X \subset \PP^n$ be a smooth variety of dimension $m<n$ and
consider the conormal variety $\mathcal{C}_X=\PP(N_{X/\PP^n}^{\vee})
\subset \PP^n \times (\PP^n)^*$ introduced above.  We use
$\mathcal{O}(1)$ to denote the dual of the tautological line bundle on
$\mathcal{C}_X$, see \cite[ Appendix B.5.1 and B.5.5]{F98}, and
$\cOO=c_1(\mathcal{O}(1))$ denotes the first Chern class of
$\mathcal{O}(1)$. Also, let $\pi: \mathcal{C}_X \rightarrow X$ be the
projection. Note that $\dim{\mathcal{C}_X}=n-1$.

\begin{rem} \label{rem:relations}
  In the sequel we will compute the degrees of zero cycle classes in
  $A_0(\PP(N_{X/\PP^n}^{\vee}))$.  By \cite[ Theorem 3.3 (b)]{F98}, $A_0(X)
  \cong A_0(\PP(N_{X/\PP^n}^{\vee}))$ via the map $\alpha \mapsto
  \cOO^{n-m-1}\pi^*\alpha$. This means that every element of
  $A_0(\PP(N_{X/\PP^n}^{\vee}))$ can be written uniquely in the form
  $\cOO^{n-m-1}\pi^*\alpha$ where $\alpha \in A_0(X),$ leading to a
  degree formula:
  \[\deg{\cOO^{n-m-1}\pi^*\alpha}=\deg{\alpha}.\]
   Also, by \cite[Remark 3.2.4]{F98} 
\begin{equation} \label{eq:canonical}
  \cOO^{n-m}+c_1(\pi^*N_{X/\PP^n}^{\vee})\cOO^{n-m-1}+\dots+c_{n-m}(\pi^*N_{X/\PP^n}^{\vee})=0.
\end{equation}
  Hence, given a zero cycle class $Z \in A_0(\PP(N_{X/\PP^n}^{\vee}))$
  of the form $Z=\cOO^i\pi^*\beta$ where $i>n-m-1$ and $\beta \in
  A_{i-(n-m-1)}(X)$ we may use \eqref{eq:canonical} to write $Z$ as
  $\cOO^{n-m-1}\pi^*\alpha$ for some $\alpha \in A_0(X)$. More
  generally, consider a 0-cycle class $Z \in
  A_0(\PP(N_{X/\PP^n}^{\vee}))$ which is a polynomial in $\xi$ and
  pull-backs of classes on $X$,
  $Z=\sum_{i=0}^l\cOO^i\pi^*\beta_i$. Then $\cOO^i\pi^*\beta_i=0$ for
  $i < n-m-1$ and $\beta_i \in A_{i-(n-m-1)}(X)$ for $i\geq
  n-m-1$. Again we can use the relation \eqref{eq:canonical} to write
  $Z$ as $\cOO^{n-m-1}\pi^*\alpha$ for some $\alpha \in A_0(X)$. This
  may be done in practice by applying the function
  \verb|pseudoRemainder| in Macaulay2 \cite{M2} to $Z$ and the left
  hand side of \eqref{eq:canonical}. We will make use of this to
  compute bottleneck degrees in \algref{alg:Bmn}.
\end{rem}
  
We will consider $\mathcal{C}_X$ as a subvariety of $\PP^n \times
\PP^n$ as follows. Fix coordinates on $\PP^n$ induced by the standard
basis of $\CC^{n+1}$. Then identify $\PP^n$ with $(\PP^n)^*$ via the
isomorphism $L:\PP^n \to (\PP^n)^*$ which sends a point
$(a_0,\dots,a_n) \in \PP^n$ to the hyperplane $\{(x_0,\dots,x_n) \in
\PP^n: a_0x_0+\dots+a_nx_n=0\}$. Define $a \perp b$ by $\sum_{i=0}^n
a_ib_i=0$ for $a=(a_0,\dots,a_n), (b_0,\dots,b_n) \in \PP^n$. For a
point $p \in X$ we denote by $(\mathbb{T}_pX)^{\perp}$ the orthogonal
complement of the projective tangent space of $X$ at $p$. The span
$\langle p, (\mathbb{T}_pX)^{\perp} \rangle$ of $p$ and
$(\mathbb{T}_pX)^{\perp}$ is called the Euclidean normal space of $X$
at $p$ and is denoted $N_pX$. The Euclidean normal space is
intrinsically related to the conormal variety as:
\[\mathcal{C}_X=\{(p,q)
\in \PP^n\times \PP^n: p \in X, q \in (\mathbb{T}_pX)^{\perp}\}.\]

\begin{defn}\label{gen}
We say that a smooth variety $X \subset \PP^n$ is in \emph{general
  position} if $\mathcal{C}_X$ is disjoint from the diagonal $\Delta
\subset \PP^n \times \PP^n$.
\end{defn}

Let $Q \subset \PP^n$ be the \emph{isotropic quadric}, which is
defined by $\sum_{i=0}^n x_i^2=0$. If $p \in X \cap Q$ is that such
that $\mathbb{T}_pX \subseteq \mathbb{T}_pQ$, then $(p,p) \in
\mathcal{C}_X$. Conversely, if $(p,p) \in \mathcal{C}_X$, then $p \in
X \cap Q$ and $\mathbb{T}_pX \subseteq \mathbb{T}_pQ$. In other words,
$X$ is in general position if and only if $X$ intersects the isotropic
quadric transversely.

Suppose that $X$ is in general position. We then have a map
\begin{equation} \label{eq:normallinemap}
f:\mathcal{C}_X \rightarrow \gr{2}{n+1} : (p,q) \mapsto \langle p,q
\rangle,
\end{equation}
from $\mathcal{C}_X$ to the Grassmannian of lines in $\PP^n.$ The map
sends a pair $(p,q)$ to the line spanned by $p$ and $q$. For the
remainder of the paper, $f$ will be used to denote this map associated
to a variety $X$. To simplify notation we will also let
$G=\gr{2}{n+1}$.

Note that for $p \in X$, the map $f$ restricted to the fiber
$\{(p',q') \in \mathcal{C}_X:p'=p\}$ parameterizes lines in the
Euclidean normal space $N_pX$ passing through $p$.

\begin{ex}
In the case where $X \subset \PP^n$ is a smooth hypersurface,
$\mathcal{C}_X \cong X$ via the projection on the first factor of
$\PP^n \times (\PP^n)^*$. Consider a general curve $X \subset \PP^2$
of degree $d$ defined by a polynomial $F \in \CC[x,y,z]$. For $u \in
\{x,y,z\}$, let $F_u=\frac{\partial F}{\partial u}$. In this case
$G=(\PP^2)^*$ and the map $f:X \to (\PP^2)^*$ defined above is given
by $(x,y,z) \mapsto (yF_z-zF_y,zF_x-xF_z,xF_y-yF_x)$. Note that
$f(p)=N_pX$ is the Euclidean normal line to $X$ at $p$.
\end{ex}

Returning to a smooth $m$-dimensional variety $X \subset \PP^n$ in
general position, consider the projection $\eta: \mathcal{C}_X \times
\mathcal{C}_X \to X \times X$ and the incidence correspondence \[I(X)
= \eta(\{(u,v) \in \mathcal{C}_X\times \mathcal{C}_X: f(u)=f(v)\}).\]
Pairs $(x,y) \in I(X) \subset X\times X$ with $x\neq y$ are called
\emph{bottlenecks} of $X$. The following lemma relates this definition
of bottlenecks to the one given for affine varieties in
\secref{sec:intro}. For $x \in X$, recall the definition of the
Euclidean normal space $N_xX=\langle x, (\mathbb{T}_xX)^{\perp}
\rangle$, where $\mathbb{T}_xX$ denotes the projective tangent space
of $X$ at $x$.
\begin{lemma}
Let $X \subset \PP^n$ be a smooth variety in general position. For a
pair of distinct points $x,y \in X$, $(x,y)$ is a bottleneck if and
only if $y \in N_xX$ and $x \in N_yX$.
\end{lemma}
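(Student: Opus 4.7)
The plan is to unravel the definitions directly, using the general position hypothesis only to rule out the pathological case where $x$ sits inside its own orthogonal tangent space. The discussion immediately preceding the lemma shows that general position is equivalent to $x \notin (\mathbb{T}_xX)^{\perp}$ for every $x \in X$. I will invoke this to conclude that $N_xX = \langle x, (\mathbb{T}_xX)^{\perp}\rangle$ has dimension $n-m$, with $(\mathbb{T}_xX)^{\perp}$ sitting as a hyperplane inside $N_xX$ that misses $x$.

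For the ``only if'' direction, I would begin with a bottleneck $(x,y)$. By the definition of $I(X)$ there exist $q_1, q_2 \in \PP^n$ with $(x,q_1),(y,q_2) \in \mathcal{C}_X$ and $\langle x, q_1\rangle = \langle y, q_2\rangle$ in $G$. General position forces $x \neq q_1$ and $y \neq q_2$, so both of these spans are honest lines. Since this common line contains the two distinct points $x$ and $y$, it must equal $\overline{xy}$; and since $q_1 \in (\mathbb{T}_xX)^{\perp}$ the line $\langle x, q_1\rangle$ lies inside $N_xX$, so $y \in N_xX$. The symmetric argument gives $x \in N_yX$.

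For the ``if'' direction, I would set $\ell = \overline{xy}$ and note that $\ell \subset N_xX$ because both endpoints lie in $N_xX$. The task is then to produce $q_1 \in \ell$ with $q_1 \in (\mathbb{T}_xX)^{\perp}$ and $q_1 \neq x$. Inside the $(n-m)$-dimensional space $N_xX$, the hyperplane $(\mathbb{T}_xX)^{\perp}$ meets the line $\ell$ by dimension count, supplying such a $q_1$; general position then guarantees $q_1 \neq x$, and hence $\langle x, q_1\rangle = \ell$. A symmetric construction provides $q_2$ for $y$, witnessing $(x,y) \in I(X)$.

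No step is technically deep; the only place where a little care is needed is verifying that $q_1$ can be chosen distinct from $x$ so that $\langle x, q_1\rangle$ is a well-defined line equal to $\overline{xy}$, and this is exactly the content of the general position hypothesis.
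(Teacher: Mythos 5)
Your proof is correct and takes essentially the same approach as the paper's: both directions reduce to the observation that a point of $\mathcal{C}_X$ over $x$ is a pair $(x,q)$ with $q \in (\mathbb{T}_xX)^\perp$, and general position (i.e.\ $x \notin (\mathbb{T}_xX)^\perp$) guarantees that $\langle x, q\rangle$ is an honest line contained in $N_xX$. Your dimension-count phrasing of the ``if'' direction produces exactly the same point $q = \overline{xy}\cap (\mathbb{T}_xX)^\perp$ that the paper obtains by writing $y$ as a point of the span $\langle x,(\mathbb{T}_xX)^\perp\rangle$.
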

\begin{proof}
By definition $(x,q) \in \mathcal{C}_X \subset \PP^n \times \PP^n$ if
and only if $x \in X$ and $q \in (\mathbb{T}_xX)^{\perp}$. Hence, for $(x,q)
\in \mathcal{C}_X$, the line $\langle x,q \rangle$ is contained in
$N_xX$. Now, if $(x,y) \in X \times X$ is a bottleneck, then $(x,q),
(y,q') \in \mathcal{C}_X$ for some $q,q' \in \PP^n$ with $\langle x, q
\rangle = \langle y, q' \rangle$. Hence $y \in \langle x, q \rangle
\subseteq N_xX$. In the same way $x \in N_yX$. To see the converse let
$x,y \in X$ be distinct points such that $y \in N_xX$ and $x \in
N_yX$. Since $y \in N_xX$, $y \in \langle x, q \rangle$ for some $q
\in (\mathbb{T}_xX)^{\perp}$. Then $(x,q) \in \mathcal{C}_X$ and $q \neq x$
since $X$ is in general position. This implies that $\langle x, y
\rangle=\langle x,q \rangle$. In the same way, $x \in N_y X$ implies
that $(y,q') \in \mathcal{C}_X$ for some $q' \in \PP^n$ with $\langle
x, y \rangle=\langle y,q' \rangle$. Since $\langle x,q \rangle =
\langle y,q' \rangle$, $(x,y)$ is a bottleneck.
\end{proof}

Applying the double point formula to the map $f$ we
obtain \[\mathbb{D}(f) =
f^*f_*[\mathcal{C}_X]-(c(f^*T_{G})c(T_{\mathcal{C}_X})^{-1})_{n-1},\]
where $\mathbb{D}(f)$ is the double point class of $f$.

\begin{defn} \label{def:bndegree}
  Let $X \subset \PP^n$ be a smooth variety in general position. We
  call $\deg{\mathbb{D}(f)}$ the \emph{bottleneck degree} of $X$ and
  denote it by $\BND{X}$.
\end{defn}

The bottleneck degree is introduced to count bottlenecks on $X$ but
there are some issues that need to be considered. The first issue is
that there might be higher dimensional components worth of
bottlenecks. In this case the bottleneck degree assigns multiplicities
to these components which contribute to the bottleneck degree. We will
not pursue this aspect of bottlenecks in this paper even though it is
an interesting topic. Consider now a smooth variety $X \subset \PP^n$
in general position with only finitely many bottlenecks. As mentioned
in \secref{sec:notation}, the double point scheme of $f$ contains not
only bottlenecks but also the tangent directions in
$\PP(T_{\mathcal{C}_X})$ where the differential of $f$ vanishes. This
motivates the following definition of \emph{bottleneck regular}
varieties. As we shall see in \propref{prop:bn-number}, the bottleneck
degree is equal to the number of bottlenecks counted with multiplicity
in this case.

\begin{defn}\label{def:bnregular}
  We will call a smooth variety $X \subset \PP^n$ \emph{bottleneck
    regular} (BN-regular) if
\begin{enumerate}
\item $X$ is in general position,
\item $X$ has only finitely many bottlenecks and
\item the differential $df_p:T_p\mathcal{C}_X \to T_{f(p)}G$ of the map $f$ has
  full rank for all $p \in \mathcal{C}_X$.
\end{enumerate}
\end{defn}

\begin{prop} \label{prop:ambient}
  Assume $X$ is BN regular. Let $X \subset \PP^a \subseteq \PP^b$ be a smooth variety where
  $\PP^a \subseteq \PP^b$ is a coordinate subspace. If $X$ is in
  general position with respect to $\PP^a$ then $X$ is in general
  position with respect to $\PP^b$ and the bottleneck degree is
  independent of the choice of ambient space.
\end{prop}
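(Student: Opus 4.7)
The plan is to establish two separate claims: first, that general position transfers from $\PP^a$ to $\PP^b$; second, that the bottleneck degree $\BND{X}$ does not depend on the choice of ambient space.

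For general position, I would observe that the isotropic quadric $Q_b \subset \PP^b$ cut out by $\sum_{i=0}^b x_i^2 = 0$ restricts to the isotropic quadric $Q_a \subset \PP^a$ on the coordinate subspace, so $X \cap Q_b = X \cap Q_a$. At any $p$ in this intersection, $\mathbb{T}_p Q_b = \mathbb{T}_p Q_a + (\PP^a)^{\perp}$, where $(\PP^a)^{\perp}$ denotes the complementary coordinate subspace with respect to the standard bilinear form. Since $\mathbb{T}_p X \subseteq \PP^a$, general position in $\PP^a$ gives $\mathbb{T}_p X + \mathbb{T}_p Q_a = \PP^a$, and adding $(\PP^a)^{\perp}$ yields $\mathbb{T}_p X + \mathbb{T}_p Q_b = \PP^b$. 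Hence $X$ is in general position in $\PP^b$.

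For the invariance of $\BND{X}$, I would first show that the bottleneck set in $\PP^b$ coincides with the bottleneck set in $\PP^a$. Indeed, for $x, y \in X \subset \PP^a$, the condition $y \in N_x^b X = \langle x, (\mathbb{T}_x X)^{\perp_b}\rangle$ combined with $y \in \PP^a$ forces $y \in N_x^a X$, because any $(\PP^a)^{\perp}$-contribution in a point of $N_x^b X \cap \PP^a$ must vanish. Next I would verify that $X$ remains BN-regular in $\PP^b$: finiteness of bottlenecks is immediate, and for the differential condition on $f_b \colon \mathcal{C}_X^b \to \gr{2}{b+1}$, I would use the conormal exact sequence
\[
0 \to \mathcal{O}_X(-1)^{b-a} \to N^\vee_{X/\PP^b} \to N^\vee_{X/\PP^a} \to 0
\]
to decompose the tangent bundle of $\mathcal{C}_X^b$ along the subvariety $\mathcal{C}_X^a$, and check that the extra $(\PP^a)^{\perp}$-fiber directions map under $df_b$ to tangent directions of $\gr{2}{b+1}$ that are linearly independent from the image of $df_a$ (which is injective by hypothesis). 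Once BN-regularity in $\PP^b$ is established, the equality of $\BND{X}$ computed in $\PP^a$ and in $\PP^b$ follows by invoking the subsequent proposition identifying $\BND{X}$ with the number of bottlenecks counted with multiplicity one.

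The main obstacle is verifying the differential condition at points of $\mathcal{C}_X^b \setminus \mathcal{C}_X^a$, i.e., at pairs $(p,q)$ where $q$ has a nontrivial component in the added coordinate directions. At such a point the line $\langle p, q\rangle$ is not contained in $\PP^a$ and meets $\PP^a$ only at $p$, which rules out new double points of $f_b$; a local computation in the projective bundle structure $\mathcal{C}_X^b \to X$ should give injectivity of $df_b$ there.
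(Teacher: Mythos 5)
Your general-position argument is correct, though routed differently from the paper: you pass through transversality with the isotropic quadric, whereas the paper argues directly with the conormal variety (if $(p,q)\in\mathcal{C}^b\cap\Delta_b$, then $q=p\in\PP^a$ since $p\in X$, so already $(p,p)\in\mathcal{C}^a\cap\Delta_a=\emptyset$). Both are fine. Your observation that the bottleneck sets in $\PP^a$ and $\PP^b$ coincide is also correct and mirrors one of the paper's two sub-arguments.

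The gap is in how you try to conclude. You plan to invoke \propref{prop:bn-number}, but that proposition counts bottlenecks \emph{with multiplicity}, where the multiplicity is assigned by the scheme structure of the double point scheme $\tilde{D}(f)$; it does not say the multiplicities are $1$, and BN-regularity does not force them to be $1$ (injectivity of $df$ at each point is weaker than transversality of $f\times f$ to the diagonal in $G\times G$). Thus showing the bottleneck \emph{sets} agree is insufficient: you would still have to show that each bottleneck receives the same multiplicity whether computed via $f_a$ or $f_b$, i.e.\ that the two double point schemes agree as schemes, not just as sets. This is exactly what the paper proves: it establishes $\tilde{D}(f_a)=\tilde{D}(f_b)$ directly and then gets $\mathbb{D}(f_a)=\mathbb{D}(f_b)$ without detouring through \propref{prop:bn-number}. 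Relatedly, your treatment of the differential condition on $\mathcal{C}^b_X\setminus\mathcal{C}^a_X$ is left as a sketch (you flag it yourself as the main obstacle); the paper closes it concretely by lifting to analytic arcs $P,Q:D\to\CC^{b+1}\setminus\{0\}$ and using the description of $d\gamma$ from \cite[Example~16.1]{H92} to reduce to the two cases $P'(0)\notin\langle P(0)\rangle$ (forcing $Q(0),Q'(0)\in\CC^{a+1}$) and $P'(0)\in\langle P(0)\rangle$ (forcing $(p,p)\in\mathcal{C}^b$, contradicting general position). You would need to supply an argument of comparable strength, and you would also need to upgrade your set-theoretic claims to scheme-theoretic ones before the conclusion follows.
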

\begin{proof}
For $c=a,b$, let $\mathcal{C}^c$ denote the conormal variety with
respect to the embedding $X \subset \PP^c$. The embedding $\PP^a
\subseteq \PP^b$ induces an embedding $\mathcal{C}^a \subseteq
\mathcal{C}^b$. Similarly for $c=a,b$, let $f_c:\mathcal{C}^c \to
\gr{2}{c+1}$ be the map given by $(p,q) \mapsto \langle p,q \rangle$
and let $\Delta_c \subset \PP^c \times \PP^c$ be the diagonal. Suppose
that $(p,q) \in \mathcal{C}^b \cap \Delta_b$. Since $p \in X \subset
\PP^a$, we have that $q=p \in \PP^a$ and $(p,q) \in \mathcal{C}^a
\cap \Delta_a = \emptyset$. Hence $X$ is in general position with
respect to $\PP^b$.

We will consider $\mathbb{D}(f_a)$ as a cycle class on
$\mathcal{C}^b$ via the inclusion $\mathcal{C}^a \subseteq
\mathcal{C}^b$. We will show that $\tilde{D}(f_a) =
\tilde{D}(f_b)$.  Since $X$ is BN-regular $[\tilde{D}(f_c)]= \mathbb{D}(f_c)$
for $c=a,b.$   
It follows that $\mathbb{D}(f_a)=\mathbb{D}(f_b)$,
which in turn implies that the bottleneck degree is independent of the
choice of ambient space. Note that $\tilde{D}(f_a) \subseteq
\tilde{D}(f_b)$.

We will first show that the differential $df_b:T_{\mathcal{C}^b} \to
T_G$ has full rank outside $T_{\mathcal{C}^a}$. This implies that
$\tilde{D}(f_b) \setminus \tilde{D}(f_a)$ consists of pairs $x,y \in
\mathcal{C}^b$ with $x \neq y$ and $f_b(x)=f_b(y)$. Suppose that
$x=(p,q) \in \mathcal{C}^b$ and $v \in T_x\mathcal{C}^b$ is a non-zero
tangent vector such that $(df_b)_x(v)=0$. Let $D \subset \CC$ be the
unit disk and let $P,Q:D \to \CC^{b+1} \setminus \{0\}$ be smooth
analytic curves such that the induced curve $D \to \PP^n \times \PP^n$
is contained in $\mathcal{C}^b$, passes through $x=(p,q)$ at $0 \in D$
and has tangent vector $v$ there. In other words $P(0) \in p$ and
$Q(0) \in q$ are representatives of $p$ and $q$. We need to show that
$Q(0),Q'(0) \in \CC^{a+1}$. Since $(df_b)_x(v)=0$, we have by
\cite[Example 16.1]{H92}  that $P'(0),Q'(0) \in \langle
P(0),Q(0)\rangle$. Suppose first that $P'(0)$ and $P(0)$ are
independent. Then $Q(0),Q'(0) \in \langle P(0),P'(0) \rangle$ and
$\langle P(0),P'(0) \rangle \subseteq \CC^{a+1}$ since $X \subset
\PP^a$. Now suppose that $P'(0)$ is a multiple of $P(0)$. Since $v
\neq 0$, $Q(0)$ and $Q'(0)$ are independent and $Q'(0)$ corresponds to
a point $q' \in \PP^n$. That $P(0)$ and $P'(0)$ are dependent implies
that $(p,q') \in \mathcal{C}^b$. Moreover, $P(0) \in \langle Q(0),
Q'(0) \rangle$ by above and hence $p \in \langle q,q' \rangle$. It
follows that $(p,p) \in \mathcal{C}^b$, which contradicts that $X$ is
in general position.

Now let $(x,y) \in \tilde{D}(f_b)$ with $x \neq y$ and
$f_b(x)=f_b(y)$. If $x,y \in \mathcal{C}^a$ then $(x,y) \in
\tilde{D}(f_a)$ so assume that $x \notin \mathcal{C}^a$. Let
$x=(p_1,q_1)$ and $y=(p_2,q_2)$ with $(p_i,q_i) \in X \times
\PP^b$. Since $\langle p_1,q_1 \rangle=\langle p_2, q_2 \rangle$ and
because this line intersects $\PP^a$ in exactly one point $p \in \PP^a$, we
have that $p_1=p_2=p$. Moreover, $p \in \langle q_1,q_2 \rangle$ and
hence $(p,p) \in \mathcal{C}^a$ contradicting that $X$ is in general
position. This means that $\tilde{D}(f_b) \subseteq \tilde{D}(f_a)$
and hence $\tilde{D}(f_a)=\tilde{D}(f_b)$.
\end{proof}

If $X \subset \PP^n$ is BN-regular, then the double point scheme
$\tilde{D}(f)$ is finite and in one-to-one correspondence with the
bottlenecks of $X$ through the projection $\eta:\mathcal{C}_X \times
\mathcal{C}_X \to X \times X$. Using the scheme-structure of
$\tilde{D}(f)$ we assign a multiplicity to each bottleneck. With
notation as in \secref{sec:notation},
$[\tilde{D}(f)]=\bar{\mathbb{D}}(f)$ and we therefore have the
following.

\begin{prop} \label{prop:bn-number}
  If $X \subset \PP^n$ is BN-regular, then $\BND{X}$ is equal to
  the number of bottlenecks of $X$ counted with multiplicity.
\end{prop}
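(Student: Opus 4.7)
The plan is to unravel the definition $\BND{X} = \deg{\mathbb{D}(f)}$ and show that, under BN-regularity, each ingredient reduces to the scheme-theoretic count of bottlenecks.

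First I would examine the support of the double point scheme $\tilde{D}(f) \subset \Bl{\Delta_{\mathcal{C}_X}}{\mathcal{C}_X \times \mathcal{C}_X}$. As recalled in \secref{sec:notation}, this support consists of pairs of distinct points $(u,v) \in \mathcal{C}_X \times \mathcal{C}_X$ with $f(u) = f(v)$, together with tangent directions in $\PP(T_{\mathcal{C}_X})$ where the differential $df$ vanishes. Condition $(3)$ of BN-regularity eliminates the second contribution, so the support lies inside $\mathcal{C}_X \times \mathcal{C}_X$ away from the diagonal, where the blow-up is an isomorphism. Condition $(2)$ then guarantees this support is finite, matching the expected dimension $2\dim{\mathcal{C}_X} - \dim{G} = 2(n-1) - 2(n-1) = 0$.

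Because $\tilde{D}(f)$ has the expected dimension zero, Fulton's Theorem 9.2 yields $\bar{\mathbb{D}}(f) = [\tilde{D}(f)]$, so $\mathbb{D}(f)$ is the pushforward of $[\tilde{D}(f)]$ to $\mathcal{C}_X$. Since pushforward preserves the degree of zero-cycles, $\BND{X} = \deg{\mathbb{D}(f)} = \deg{[\tilde{D}(f)]}$, i.e.\ the total scheme-length of the finite scheme $\tilde{D}(f)$.

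It remains to identify this length with the count of bottlenecks of $X$. The projection $\mathcal{C}_X \times \mathcal{C}_X \to X \times X$ sends each $((x,q_1),(y,q_2)) \in \tilde{D}(f)$ to the pair $(x,y)$, and I would argue it restricts to a bijection onto the bottleneck locus. Given a bottleneck $(x,y)$, the preceding lemma gives $y \in N_xX = \langle x,(\mathbb{T}_xX)^{\perp}\rangle$, so the line $L=\langle x,y \rangle$ lies in $N_xX$. If $L \subseteq (\mathbb{T}_xX)^{\perp}$, then $x \in (\mathbb{T}_xX)^{\perp}$ and $(x,x) \in \mathcal{C}_X$, contradicting general position; hence $L$ meets $(\mathbb{T}_xX)^{\perp}$ in a unique point $q_1$, and symmetrically $L \cap (\mathbb{T}_yX)^{\perp} = \{q_2\}$, producing the unique lift. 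The main obstacle I expect is upgrading this set-theoretic bijection to a scheme-theoretic isomorphism so that multiplicities transfer correctly; the full-rank condition $(3)$ on $df$ should make the projection étale at each point of $\tilde{D}(f)$, which delivers the required local isomorphism. With this in hand, $\deg{[\tilde{D}(f)]}$ is precisely the number of bottlenecks of $X$ counted with multiplicity, completing the proof.
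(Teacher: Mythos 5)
Your proposal follows the same route as the paper: BN-regularity conditions (2) and (3) force $\tilde{D}(f)$ to be finite and supported off the exceptional divisor, the expected-dimension hypothesis gives $\bar{\mathbb{D}}(f)=[\tilde{D}(f)]$, pushforward preserves degree of zero-cycles, and the projection to $X\times X$ identifies the support with the bottlenecks; your explicit uniqueness argument for the lift of a bottleneck $(x,y)$ to $\mathcal{C}_X\times\mathcal{C}_X$ (using that $L\cap(\mathbb{T}_xX)^{\perp}$ is a single point by general position) is a welcome fleshing-out of what the paper merely asserts as a one-to-one correspondence.

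One point worth clarifying: the worry in your last paragraph is not actually an obstacle, because the paper \emph{defines} the multiplicity of a bottleneck via the scheme structure of $\tilde{D}(f)$ at the corresponding point (``Using the scheme-structure of $\tilde{D}(f)$ we assign a multiplicity to each bottleneck''). With that convention, the proposition reduces to the statements you have already established, and no scheme-theoretic isomorphism with a cycle on $X\times X$ is required. If one instead wanted to compare against a multiplicity defined intrinsically on $X\times X$ (say from the ideal \eqref{eq:realbn}), the \'etale-ness heuristic would need more care: the ambient projection $\mathcal{C}_X\times\mathcal{C}_X\to X\times X$ has positive-dimensional fibers whenever $\operatorname{codim} X>1$, so full rank of $df$ alone does not make the restricted projection \'etale. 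Fortunately, that comparison is not what the proposition asserts.
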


\begin{rem} \label{rem:chern}
Recalling the notation from above, $\mathcal{O}(1)$ denotes the dual
of the tautological line bundle on the conormal variety
$\mathcal{C}_X$, $\pi:\mathcal{C}_X \to X$ is the projection and
$\xi=c_1(\mathcal{O}(1))$. The bottleneck degree depends on the Chern
classes of $\mathcal{C}_X$ and below we shall relate these to the
Chern classes of $X$, the hyperplane class and $\xi$. By \cite[Example 3.2.11]{F98}
 we have that
$c(T_{\mathcal{C}_X})=c(\pi^*T_X)c(\pi^*N_{X/\PP^n}^{\vee} \otimes
\mathcal{O}(1))$. Since the rank of $N_{X/\PP^n}^{\vee}$ is $n-m$ we
have by \cite[Remark 3.2.3]{F98}  that
\[\begin{matrix}
c(\pi^*N_{X/\PP^n}^{\vee} \otimes \mathcal{O}(1))&=&\sum_{i=0}^{n-m}c_i(\pi^*N_{X/\PP^n}^{\vee})(1+\cOO)^{n-m-i}\\
&=&\sum_{i=0}^{n-m}(-1)^i\pi^*c_i(N_{X/\PP^n})(1+\cOO)^{n-m-i}.
\end{matrix}\]
Note also that $c_i(N_{X/\PP^n})=0$ for $i>m=\dim{X}$. Moreover, the
normal bundle $N_{X/\PP^n}$ is related to the tangent bundles $T_X$
and $T_{\PP^n}$ by the exact sequence \[0 \to T_{X} \to i^*T_{\PP^n}
\to N_{X/\PP^n} \to 0,\] where $i:X \to \PP^n$ is the inclusion. It
follows that $c(N_{X/\PP^n})=c(i^*T_{\PP^n})c(T_X)^{-1}$. Also,
$c(T_{\PP^n})=(1+H)^{n+1}$ where $H \in A_{n-1}(\PP^n)$ is the
hyperplane class.
\end{rem}

For $n-1 \geq a\geq b \geq 0$, define the Schubert class $\sigma_{a,b}
\in A_*(G)$ as the class of the locus $\Sigma_{a,b} = \{l\in G: l \cap
A \neq \emptyset, l \subset B \}$ where $A \subset B \subseteq \PP^n$
is a general flag of linear spaces with $\codim{A}=a+1$ and
$\codim{B}=b$. In the case $b=0$ we use the notation
$\sigma_{a,0}=\sigma_a$. See \cite{EH16} for basic properties of
Schubert classes. In particular we will make use of the relations
$\sigma_1^2=\sigma_{1,1}+\sigma_2$ if $n \geq 3$ and
$\sigma_{a+c,b+c}=\sigma_{c,c}\sigma_{a,b}$ for $n-1 \geq a\geq b \geq
0$ and $0\leq a+c\leq n-1$. Also, $\sigma_{n-1-i,i}\cdot
\sigma_{n-1-j,j}=0$ for $0\leq i,j \leq \left \lfloor \frac{n-1}{2}
\right \rfloor$ if $i\neq j$ and $\sigma_{n-1-i,i}^2$ is the class of
a point. In \algref{alg:Bmn} below we will need to express the total
Chern class $c(T_{G})$ of the Grassmannian as a polynomial in Schubert
classes. To do this we apply the routine \verb|chern| from the
Macaulay2 package \emph{Schubert2} \cite{Schubert2Source}.

We will recall the definition of the polar classes $p_0,\dots,p_m \in
A_*(X)$ of $X$. For a general linear space $V \subseteq \PP^n$ of
dimension $n-m-2+j$ we have that $p_j$ is the class represented by the
polar locus \[P_j(X,V)=\{x \in X: \dim{\mathbb{T}_xX \cap V} \geq
j-1\}.\] If $X$ has codimension $1$ and $j=0$, then $V$ is the empty
set using the convention $\dim{\emptyset}=-1$. By \cite[Example
14.4.15]{F98}, $P_j(X,V)$ is either empty or of pure codimension $j$ and
\begin{equation} \label{eq:polar-chern}
  p_j = \sum_{i=0}^j (-1)^i \binom{m-i+1}{j-i}h^{j-i}c_i(T_X),
\end{equation}
where $h \in A_{n-1}(X)$ is the hyperplane class. Moreover, the polar
loci $P_j(X,V)$ are reduced, see \cite{P78}. Inverting the
relationship between polar classes and Chern classes we get
\begin{equation} \label{eq:chern-polar}
  c_j(T_X) = \sum_{i=0}^j (-1)^i \binom{m-i+1}{j-i}h^{j-i}p_i.
\end{equation}

We will examine an alternative interpretation of polar classes via the
conormal variety $\mathcal{C}_X$. This will help us to determine the
class of $\mathcal{C}_X$ in $A_*(\PP^n \times \PP^n)$. Recall that the
polar loci $P_j(X,V)$ are either empty or of codimension $j$. It
follows that for a generic point $x \in P_j(X,V)$, $\mathbb{T}_xX$
intersects $V$ in exactly dimension $j-1$, that is $\dim{\mathbb{T}_xX
  \cap V} = j-1$. Let $0\leq i \leq m$ and let $\hat{V},W \subseteq
\PP^n$ be general linear spaces with $\dim{\hat{V}}=i+1$ and
$\dim{W}=n-i$. Recall the fixed isomorphism $L:\PP^n \to (\PP^n)^*$
and let $V \subset \PP^n$ be the intersection of all hyperplanes in
$L(\hat{V})$. Note that $\dim{V}=n-2-i$. Now consider the intersection
$J = \mathcal{C}_X \cap (W \times \hat{V}) \subseteq \PP^n \times
\PP^n$. Then $J$ is finite and we have the projection map $\pi_{|J}:J
\to P_{m-i}(X,V) \cap W$. Now, $\pi_{|J}$ is bijective onto
$P_{m-i}(X,V) \cap W$ because given $x \in P_{m-i}(X,V) \cap W$,
$\dim{\mathbb{T}_xX \cap V}=m-i-1$ and therefore the span of
$\mathbb{T}_xX$ and $V$ is the unique hyperplane containing
$\mathbb{T}_xX$ and $V$. Let $\alpha, \beta \in A_{2n-1}(\PP^n \times
\PP^n)$ be the pullbacks of the hyperplane class of $\PP^n$ under the
two projections and consider $[\mathcal{C}_X]$ as an element of
$A_*(\PP^n \times \PP^n)$. Then $[W \times \hat{V}] =
\alpha^{i}\beta^{n-1-i}$ and $\deg{[\mathcal{C}_X] \cdot
  \alpha^{i}\beta^{n-1-i}}=\deg{J}=\deg{p_{m-i}}$. Note that
$[\mathcal{C}_X] \cdot \alpha^{i}=0$ if $i>m$ since $\alpha$ is the
pullback of a divisor on $\PP^n$.

\begin{thm} \label{thm:misc}
Let $X \subset \PP^n$ be a smooth $m$-dimensional variety in general
position. Let $h=\pi^*(h_X) \in A_*(\mathcal{C}_X)$ where $h_X \in
A_*(X)$ is the hyperplane class and $\pi:\mathcal{C}_X \to X$ is the
projection. We use $\mathcal{O}(1)$ to denote the dual of the
tautological line bundle on $\mathcal{C}_X$ and $\xi$ to denote its
first Chern class. Also $\alpha, \beta \in A_{2n-1}(\PP^n \times
\PP^n)$ denote the pullbacks of the hyperplane class of $\PP^n$ under
the two projections. Let $k=\min\{\left \lfloor \frac{n-1}{2} \right
\rfloor,m\}$ and for $i=0,\dots,k$, put $\epsilon_i=\sum_{j=r_i}^{m-i}
\deg{p_j}$ where $r_i=\max\{0,m-n+1+i\}$. Then the following holds:
\begin{align}
  \label{eq:conormal}
  [\mathcal{C}_X] = \sum_{i=0}^{m} \deg{p_{m-i}}
  \alpha^{n-i}\beta^{1+i},\\
  \label{eq:schubert}
    f^*(\sigma_{a,b}) = \sum_{i=0}^{a-b}
h^{b+i}(\cOO-h)^{a-i},\\
  \label{eq:image}
  f_*[\mathcal{C}_X] = \sum_{i=0}^k \epsilon_i
  \sigma_{n-1-i,i},\\
  \label{eq:sos}
  \deg{f^*f_*[\mathcal{C}_X]} = \sum_{i=0}^k \epsilon_i^2.
\end{align}
Hence \[\BND{X}=\sum_{i=0}^k \epsilon_i^2-\deg{B_{m,n}},\] for some polynomial $B_{m,n}$ in the 
polar classes and the hyperplane class  of $X.$
\end{thm}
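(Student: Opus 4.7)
The plan is to prove the four displayed equations (\ref{eq:conormal})--(\ref{eq:sos}) in order, after which the final formula for $\BND{X}$ is immediate from the double point formula applied to $f:\mathcal{C}_X\to G$, with $B_{m,n}:=(c(f^*T_G)c(T_{\mathcal{C}_X})^{-1})_{n-1}$. For (\ref{eq:conormal}) I would observe that $[\mathcal{C}_X]$ has codimension $n+1$ in $\PP^n\times\PP^n$, so admits a unique expansion $\sum_i c_i\alpha^{n-i}\beta^{1+i}$, and the coefficient $c_i$ is recovered by intersecting with the complementary class $\alpha^i\beta^{n-1-i}$. The paragraph immediately preceding the theorem evaluates this intersection number as $\deg{p_{m-i}}$ for $0\le i\le m$ and as $0$ otherwise, since $[\mathcal{C}_X]\cdot\alpha^i$ is supported on $X$.

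For (\ref{eq:schubert}) I would first exhibit the short exact sequence on $\mathcal{C}_X$,
\[0 \to p_1^*\mathcal{O}_X(-1) \to f^*S \to L \to 0,\]
where $S$ is the tautological rank-$2$ subbundle on $G$: the fibre of $f^*S$ at $(p,q)$ is the plane $\langle p,q\rangle\subset \CC^{n+1}$, which contains $\CC p$ canonically. General position guarantees $p\ne q$ on all of $\mathcal{C}_X$, so the quotient $L$ is canonically isomorphic to $p_2^*\mathcal{O}_{(\PP^n)^*}(-1)|_{\mathcal{C}_X}$. To relate $c_1(L)$ to $h$ and $\xi$ I would exploit the twisted projectivization $\mathcal{C}_X=\PP(N_{X/\PP^n}^\vee)\cong \PP(N_{X/\PP^n}^\vee\otimes \mathcal{O}_X(1))$ supplied by (\ref{eq:jet}): the second description embeds $\mathcal{C}_X$ into $X\times (\PP^n)^*$ so that $p_2^*\mathcal{O}_{(\PP^n)^*}(-1)$ restricts to the tautological line bundle of $\PP(N_{X/\PP^n}^\vee\otimes \mathcal{O}_X(1))$, which differs from $\mathcal{O}_{\PP(N_{X/\PP^n}^\vee)}(-1)$ by the twist $\pi^*\mathcal{O}_X(1)$. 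This gives $c_1(L)=h-\xi$, so the formal Chern roots of $f^*S$ are $-h$ and $h-\xi$. From $c(f^*Q)=c(f^*S)^{-1}$ one then computes $f^*\sigma_a=\sum_{i+j=a}h^i(\xi-h)^j$, and Giambelli $\sigma_{a,b}=\sigma_a\sigma_b-\sigma_{a+1}\sigma_{b-1}$ identifies $f^*\sigma_{a,b}$ with the two-variable Schur polynomial $s_{(a,b)}(h,\xi-h)$, whose standard closed form $s_{(a,b)}(x,y)=\sum_{i=0}^{a-b}x^{b+i}y^{a-i}$ yields (\ref{eq:schubert}).

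For (\ref{eq:image}) and (\ref{eq:sos}) I would use that $\{\sigma_{n-1-i,i}\}_{0\le i\le \lfloor(n-1)/2\rfloor}$ is a self-dual basis of $A_{n-1}(G)$ under the intersection pairing. The projection formula gives the coefficient of $\sigma_{n-1-i,i}$ in $f_*[\mathcal{C}_X]$ as $\deg{[\mathcal{C}_X]\cdot f^*\sigma_{n-1-i,i}}$; substituting (\ref{eq:schubert}) and using $\deg{h^a(\xi-h)^{n-1-a}}=\deg{p_{m-a}}$ from (\ref{eq:conormal}) (zero when $a>m$), after the re-indexing $j\leftrightarrow m-i-j$ the sum becomes $\epsilon_i$, proving (\ref{eq:image}) (with automatic vanishing for $i>m$, which truncates the sum at $k$). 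A further application of the projection formula gives $\deg{f^*f_*[\mathcal{C}_X]}=\deg{(f_*[\mathcal{C}_X])^2}$, which by the orthogonality $\sigma_{n-1-i,i}\cdot\sigma_{n-1-j,j}=\delta_{ij}[\mathrm{pt}]$ equals $\sum_i\epsilon_i^2$, yielding (\ref{eq:sos}). The double point formula now gives $\BND{X}=\sum_i\epsilon_i^2-\deg{B_{m,n}}$; to see that $\deg{B_{m,n}}$ is a polynomial in the polar and hyperplane classes of $X$, use \remref{rem:chern} to express $c(T_{\mathcal{C}_X})$ in $h,\xi,\pi^*c_i(T_X)$, (\ref{eq:schubert}) to express $c(f^*T_G)$ in $h,\xi$, the relation (\ref{eq:canonical}) to reduce high powers of $\xi$ so that $\pi_*B_{m,n}$ becomes a polynomial in $h$ and $c_i(T_X)$, and (\ref{eq:chern-polar}) to convert Chern classes of $T_X$ into polar classes.

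The main obstacle is the bookkeeping in the proof of (\ref{eq:schubert}): correctly tracking the $\mathcal{O}_X(1)$-twist in $\PP(N_{X/\PP^n}^\vee)\cong \PP(N_{X/\PP^n}^\vee\otimes \mathcal{O}_X(1))$ so that one obtains $c_1(L)=h-\xi$ rather than $-\xi$ or some other variant. An error at this step propagates through every Schubert pullback, into $B_{m,n}$, and ultimately into $\BND{X}$. Once the Chern roots of $f^*S$ are pinned down, the remaining steps are formal applications of the projection formula, Giambelli, and Schubert self-duality.
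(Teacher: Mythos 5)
Your argument is correct and, for the key step (\ref{eq:schubert}), takes a genuinely different route from the paper. The paper's proof works on the blow-up $bl:\Bl{\Delta}{\PP^n\times\PP^n}\to\PP^n\times\PP^n$, citing a theorem of Jozefiak--Lascoux--Pragacz \cite{J78} for the pullback $\gamma^*(\sigma_a)$ along the extended line map $\gamma:\Bl{\Delta}{\PP^n\times\PP^n}\to G$, and then restricts to $\mathcal{C}_X$ using general position to kill the exceptional-divisor terms. You instead work directly on $\mathcal{C}_X$, exhibit the short exact sequence
\[0\to\pi^*\mathcal{O}_X(-1)\to f^*S\to L\to 0\]
for the tautological subbundle $S\subset\CC^{n+1}\otimes\mathcal{O}_G$, identify $L\cong p_2^*\mathcal{O}_{(\PP^n)^*}(-1)|_{\mathcal{C}_X}$ using that $p\neq q$ on $\mathcal{C}_X$, and pin down $c_1(L)=h-\xi$ via the twisted projectivization $\PP(N^\vee_{X/\PP^n})\cong\PP(N^\vee_{X/\PP^n}\otimes\mathcal{O}_X(1))$ supplied by the jet sequence. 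From the Chern roots $-h$ and $h-\xi$ of $f^*S$ you recover $f^*\sigma_a$ as the complete homogeneous symmetric polynomial in $h$ and $\xi-h$, and Giambelli yields the Schur polynomial $s_{(a,b)}(h,\xi-h)$, which matches (\ref{eq:schubert}). This reproduces exactly the relation $\xi-h=i^*bl^*\beta$ the paper extracts from \cite[Example 3.2.21]{F98}, but bypasses the blow-up of $\PP^n\times\PP^n$ and the citation to \cite{J78}, making the computation more self-contained. Your arguments for (\ref{eq:conormal}), (\ref{eq:image}), (\ref{eq:sos}) and the final reduction of $B_{m,n}$ to polar classes coincide in substance with the paper's: the re-indexing $j\leftrightarrow m-i-j$ producing $\epsilon_i$, the orthogonality $\sigma_{n-1-i,i}\cdot\sigma_{n-1-j,j}=\delta_{ij}[\mathrm{pt}]$, and the use of Remarks \ref{rem:chern} and \ref{rem:relations} with (\ref{eq:chern-polar}) are all as in the paper.
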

\begin{proof}
To show (\ref{eq:conormal}), note that
$\codim{\mathcal{C}_X}=2n-(n-1)=n+1$ and write $[\mathcal{C}_X] =
\sum_{i=0}^{n-1} d_i \alpha^{n-i}\beta^{1+i}$ for some $d_i \in
\ZZ$. Let $0 \leq i \leq n-1$. Because $d_i=\deg{[\mathcal{C}_X] \cdot
  \alpha^i\beta^{n-1-i}}$, it follows that:
  $$d_i=\deg{p_{m-i}}\text{ if }0 \leq i \leq m\text{ 
and }d_i=0\text{ if  }i>m.$$

Let $bl: \Bl{\Delta}{\PP^n \times \PP^n} \to \PP^n \times \PP^n$ be
the blow-up of $\PP^n \times \PP^n$ along the diagonal $\Delta \subset
\PP^n \times \PP^n$ and let $E = bl^{-1}(\Delta)$, the exceptional
divisor. The map $\PP^n \times \PP^n \setminus \Delta \to
\gr{2}{n+1},$ which sends a pair of points $(p,q)$ to the line spanned
by $p$ and $q,$ extends to a map $\gamma : \Bl{\Delta}{\PP^n \times
  \PP^n} \to \gr{2}{n+1}$, see \cite{J78}. The theorem in Appendix B
paragraph 3 of \cite{J78}, with $X=\PP^N$ in the notation used there,
states that \[\gamma^*(\sigma_a) = \sum_{i=0}^a
bl^*\alpha^ibl^*\beta^{a-i} +
\sum_{i=0}^{a-1}(-1)^{i+1}\binom{a+1}{i+2}bl^*\alpha^{a-1-i}[E]^{i+1}.\]
Consider $\mathcal{C}_X$ as a subvariety of $\Bl{\Delta}{\PP^n \times
  \PP^n}$ and let $i:\mathcal{C}_X \to \Bl{\Delta}{\PP^n \times
  \PP^n}$ be the embedding. Then $i^*bl^*\alpha=h$ and by
\cite[Example 3.2.21]{F98}, $\cOO-h=i^*bl^*\beta$. Moreover, since $X$
is in general position, $i^*[E]=0$. Using $f=\gamma \circ i$, we get
that $f^*(\sigma_a)=i^*\gamma^*(\sigma_a)=\sum_{i=0}^a
h^i(\cOO-h)^{a-i}$. In particular, $f^*(\sigma_1)=\cOO$, which proves
(\ref{eq:schubert}) in the case $n=2$. If $n \geq 3$, we have by above
that $f^*(\sigma_2)=\cOO^2 -h\cOO + h^2$. Moreover
$\sigma_{1,1}=\sigma_1^2-\sigma_2$, and hence
$f^*(\sigma_{1,1})=h(\cOO - h)$. Since $\sigma_{b,b}=\sigma_{1,1}^b$
we get $f^*(\sigma_{b,b})=h^b(\cOO -h)^b$. Finally, using
$\sigma_{a,b}=\sigma_{b,b}\sigma_{a-b}$ we get that
$f^*(\sigma_{a,b})=h^b(\cOO-h)^b\sum_{i=0}^{a-b}h^i(\cOO-h)^{a-b-i}$,
which gives (\ref{eq:schubert}).

For (\ref{eq:image}), note first that
\begin{equation} \label{eq:pullback}
\gamma^*(\sigma_{a,b}) =
\gamma^*(\sigma_{b,b})\gamma^*(\sigma_{a-b})=\sum_{i=0}^{a-b}
bl^*\alpha^{b+i}bl^*\beta^{a-i} + R,
\end{equation}
where $R=[E]\cdot \delta$ for some $\delta \in A_*(\Bl{\Delta}{\PP^n
  \times \PP^n})$. Also, $f_*[\mathcal{C}_X]=\sum_{i=0}^{s} e_i
\sigma_{n-1-i,i}$ where $e_i=\deg{f_*[\mathcal{C}_X]\cdot
  \sigma_{n-1-i,i}}$ and $s=\left \lfloor \frac{n-1}{2} \right
\rfloor$. Since $\gamma$ restricts to $f$ on $\mathcal{C}_X$,
$f_*[\mathcal{C}_X] = \gamma_*[\mathcal{C}_X]$ and
$e_i=\deg{\gamma_*[\mathcal{C}_X]\cdot
  \sigma_{n-1-i,i}}=\deg{[\mathcal{C}_X]\cdot
  \gamma^*\sigma_{n-1-i,i}}$ by the projection formula. Here
$[\mathcal{C}_X]$ denotes the class of $\mathcal{C}_X$ on
$\Bl{\Delta}{\PP^n \times \PP^n}$ and $[\mathcal{C}_X] \cdot R=0$
since $X$ is in general position. Moreover, by (\ref{eq:conormal}) we
have that $[\mathcal{C}_X]=bl^*(\sum_{l=0}^{m} \deg{p_{m-l}}
\alpha^{n-l}\beta^{l+1})$. Using (\ref{eq:pullback}) we
get \[[\mathcal{C}_X]\cdot \gamma^*\sigma_{n-1-i,i} =
bl^*(\sum_{l=0}^{m} \deg{p_{m-l}} \alpha^{n-l}\beta^{l+1})\cdot
bl^*(\sum_{j=0}^{n-1-2i} \alpha^{i+j}\beta^{n-1-i-j}).\] It follows
that $[\mathcal{C}_X]\cdot \gamma^*\sigma_{n-1-i,i} = 0$ if $i>m$. For
$i \leq m$, we get \[[\mathcal{C}_X]\cdot \gamma^*\sigma_{n-1-i,i} =
bl^*(\alpha^n\beta^n)\sum_{j=0}^{t} \deg{p_{m-(i+j)}},\] where $t =
\min\{m-i,n-1-2i\}$. Hence $e_i=0$ for $i>m$ and $e_i=\epsilon_i$
otherwise, which gives (\ref{eq:image}).

To show (\ref{eq:sos}), let $0 \leq i \leq k$ and note that by the
projection formula 
$$\epsilon_i = \deg{f_*[\mathcal{C}_X]\cdot
  \sigma_{n-1-i,i}}=\deg{[\mathcal{C}_X]\cdot
  f^*\sigma_{n-1-i,i}}=\deg{f^*\sigma_{n-1-i,i}}.$$ Hence applying
$f^*$ to (\ref{eq:image}) gives (\ref{eq:sos}).

Since the intersection ring of $\gr{2}{n+1}$ is generated by
$\sigma_{a,b}$ as a group, we may express $c(f^*T_{G})$ as a
polynomial in $\cOO$ and $h$ by (\ref{eq:schubert}). Moreover,
$c(T_{\mathcal{C}_X})$ is a polynomial in pullbacks of polar classes,
$h$ and $\cOO$ by (\ref{eq:chern-polar}) and \remref{rem:chern}. It
follows from \remref{rem:relations} that
$(c(f^*T_{G})c(T_{\mathcal{C}_X})^{-1})_{n-1}=\cOO^{n-m-1}\pi^*B_{m,n}$
for some polynomial $B_{m,n}$ in polar classes and the hyperplane
class of $X$. Also by \remref{rem:relations},
$\deg{\cOO^{n-m-1}\pi^*B_{m,n}}=\deg{B_{m,n}}$.
\end{proof}

Note that the polynomials $B_{m,n}$ in \thmref{thm:misc} only depend
on $n$ and $m$. Combining \thmref{thm:misc}, \remref{rem:chern} and
\remref{rem:relations} gives an algorithm to compute polynomials
$B_{m,n}$ as in \thmref{thm:misc}. We will now give a high level
description of this algorithm. It has been implemented in Macaulay2
\cite{M2} and is available at \cite{bnscript}.

We will use the notation in \thmref{thm:misc}. In addition, we use
$p_1,\dots,p_m$ to denote the polar classes of $X$ and $c_1,\dots,c_m$
to denote the Chern classes of $X$. Also $i:X \to \PP^n$ denotes the
inclusion and $h_X$ is the hyperplane class on $X$. The algorithm
makes use of the routines \verb|pseudoRemainder| from \cite{M2} and
\verb|chern| from \cite{Schubert2Source}.

The input to the algorithm are integers $0<m<n$ and the output is a
polynomial $B_{m,n}$ in $p_1,\dots,p_m,h_X$ such that
$(c(f^*T_{G})c(T_{\mathcal{C}_X})^{-1})_{n-1}=\cOO^{n-m-1}\pi^*B_{m,n}$.

\begin{algorithm}
\caption{Algorithm to compute polynomial $B_{m,n}$ in
  $p_1,\dots,p_m,h_X$ as in \thmref{thm:misc}} \label{alg:Bmn}
\begin{algorithmic}
  \REQUIRE Integers $0<m<n$.
  \ENSURE Polynomial $B_{m,n}$.
\STATE Invert $c(T_X)$:
  $c(T_X)^{-1}=1-\delta+\delta^2+\dots+(-1)^m\delta^m$ where
  $\delta=c(T_X)-1$.

\STATE Let $c(i^*T_{\PP^n})=(1+h_X)^{n+1}$.

\STATE Compute $c(N_{X/\PP^n})=c(i^*T_{\PP^n})c(T_X)^{-1}$.

\STATE Compute $c(\pi^*N_{X/\PP^n}^{\vee} \otimes \mathcal{O}(1)) =
  \sum_{j=0}^{n-m}(-1)^j\pi^*c_j(N_{X/\PP^n})(1+\cOO)^{n-m-j}$.

\STATE Compute
  $c(T_{\mathcal{C}_X})=c(\pi^*T_X)c(\pi^*N_{X/\PP^n}^{\vee} \otimes
  \mathcal{O}(1))$.

\STATE Invert $c(T_{\mathcal{C}_X})$:
$c(T_{\mathcal{C}_X})^{-1}=1-\delta+\delta^2+\dots+(-1)^{n-1}\delta^{n-1}$
where $\delta=c(T_{\mathcal{C}_X})-1$.

 \STATE Apply \verb|chern| to express $c(T_{G})$ as a polynomial in
 Schubert classes $\sigma_{a,b}$.

\STATE Apply the substitution (\ref{eq:schubert}) to express
  $c(f^*T_{G})=f^*c(T_{G})$ as a polynomial in $\xi$ and $\pi^*h_X$.

  \STATE Compute $(c(f^*T_{G})c(T_{\mathcal{C}_X})^{-1})_{n-1}$.

\STATE Let
$R=\cOO^{n-m}-c_1(\pi^*N_{X/\PP^n})\cOO^{n-m-1}+\dots+(-1)^{n-m}c_{n-m}(\pi^*N_{X/\PP^n})$.

  \STATE Let $P$ be the output of \verb|pseudoRemainder| applied to
  $(c(f^*T_{G})c(T_{\mathcal{C}_X})^{-1})_{n-1}$ and $R$.

  \STATE Let $\hat{B}_{m,n}$ be $P$ divided by $\xi^{n-m-1}$ and with
  $\pi^*c_1,\dots,\pi^*c_m,\pi^*h_X$ replaced by $c_1,\dots,c_m,h_X$.

  \STATE Replace $c_1,\dots,c_m$ by $p_1,\dots,p_m$ using
  (\ref{eq:chern-polar}) on $\hat{B}_{m,n}$ to acquire $B_{m,n}$.
\end{algorithmic}
\end{algorithm}

\begin{coro} \label{coro:specific-formulas}
Let $X \subset \PP^n$ be a smooth variety in general position. Let $d=\deg{X}=\deg{p_0}$, $\epsilon_i=\sum_{j=0}^{m-i} \deg{p_j}$ with
$m=\dim{X}$ and $h \in A_{m-1}(X)$  the hyperplane class. The following holds:
\begin{enumerate}
\item If $X$ is a curve in 
$\PP^2$  then  \[\BND{X}=d^4-4d^2+3d.\]
\item \label{item:space-curve} If $X $ is a curve in $\PP^3$ then \[\BND{X}=\epsilon_0^2+d^2-5\deg{p_1}-2d,\] where $\epsilon_0=d+\deg{p_1}$
is the Euclidean distance degree of $X$. 
\item If $X$ is a surface in $\PP^5$ then \[\BND{X}=\epsilon_0^2 + \epsilon_1^2 +
  d^2-\deg{3h^2+6hp_1+12p_1^2+p_2}.\]
  \item If $X$ is a  threefold in $\PP^7$ then  \[\BND{X}=\epsilon_0^2 + \epsilon_1^2 + \epsilon_2^2+
  d^2-\deg{4h^3+11h^2p_1+4hp_1^2+24p_1^3+2hp_2-12p_1p_2+17p_3}.\]
\end{enumerate}
\end{coro}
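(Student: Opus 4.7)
The plan is to apply \thmref{thm:misc}, which expresses $\BND{X} = \sum_{i=0}^k \epsilon_i^2 - \deg{B_{m,n}}$ for a polynomial $B_{m,n}$ in the polar classes $p_1,\dots,p_m$ and the hyperplane class $h$ depending only on $m$ and $n$, and then to extract $B_{m,n}$ for each of the four pairs $(m,n) \in \{(1,2),(1,3),(2,5),(3,7)\}$ by running \algref{alg:Bmn}.

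For case (1), the plane curve case, I would first note that for a smooth curve $X \subset \PP^2$ of degree $d$ the conormal sequence together with $N_{X/\PP^2} \cong \mathcal{O}_X(d)$ gives $c_1(T_X) = (3-d)h$; then formula \eqref{eq:polar-chern} yields $p_1 = 2h - c_1(T_X) = (d-1)h$, so $\deg{p_1} = d(d-1)$ and $\epsilon_0 = d + \deg{p_1} = d^2$. Feeding the relation $p_1 = (d-1)h$ into the output of \algref{alg:Bmn} for $(m,n) = (1,2)$ collapses the formula to the univariate expression $d^4 - 4d^2 + 3d$.

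For cases (2), (3), (4), I would execute \algref{alg:Bmn} step by step. Concretely: invert $c(T_X)$ through codimension $n-1$; compute $c(N_{X/\PP^n}) = (1+h)^{n+1} c(T_X)^{-1}$ and twist by $\mathcal{O}(1)$ on $\mathcal{C}_X$ as in \remref{rem:chern}; multiply by $c(\pi^*T_X)$ to form $c(T_{\mathcal{C}_X})$ and invert it to codimension $n-1$; expand $c(T_G)$ in Schubert classes and apply the substitution \eqref{eq:schubert} to obtain $c(f^*T_G)$ as a polynomial in $\xi$ and $h$; extract the codimension-$(n-1)$ piece of $c(f^*T_G)\cdot c(T_{\mathcal{C}_X})^{-1}$; reduce this modulo \eqref{eq:canonical} to isolate $\xi^{n-m-1}\pi^*B_{m,n}$; finally rewrite the Chern classes appearing in $B_{m,n}$ in terms of polar classes via \eqref{eq:chern-polar}. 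Taking degrees then yields the asserted formulas.

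The main obstacle is combinatorial rather than conceptual. In the threefold case one must invert $c(T_{\mathcal{C}_X})$ up to codimension $6$, multiply by an equally lengthy expression for $c(f^*T_G)$, and reduce the resulting polynomial in $\xi$ and $h$ modulo the degree-$4$ relation \eqref{eq:canonical}; only after this reduction and the Chern-to-polar substitution do the compact formulas in the statement emerge. This bookkeeping is precisely what makes the symbolic implementation of \algref{alg:Bmn} in Macaulay2 useful, and in a fully detailed proof I would either appeal to that implementation or reproduce the same computation by hand.
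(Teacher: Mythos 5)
Your proposal is correct and takes essentially the same route as the paper: invoke \thmref{thm:misc}, run \algref{alg:Bmn} for the relevant $(m,n)$, and convert Chern classes to polar classes via \eqref{eq:chern-polar}, appealing to the Macaulay2 implementation for the heavier cases. The only cosmetic difference is in case (1): the paper derives $\BND{X}=d^4-4d^2+3d$ by substituting $\deg{p_1}=d^2-d$ and $\epsilon_0=d^2$ into the already-computed $\PP^3$ curve formula (justified by \propref{prop:ambient} and \remref{rem:polys}), whereas you propose running \algref{alg:Bmn} directly for $(m,n)=(1,2)$; both routes give the same answer and your adjunction computation $c_1(T_X)=(3-d)h$, $p_1=(d-1)h$ is correct.
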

\begin{proof}
The formulas are acquired by applying \algref{alg:Bmn}, which has been
implemented in Macaulay2 \cite{M2} and is available at
\cite{bnscript}.

For illustrative purposes we will carry out the computation for curves
in $\PP^3$. By the double point formula \[\mathbb{D}(f) =
f^*f_*[\mathcal{C}_X]-(c(f^*T_{G})c(T_{\mathcal{C}_X})^{-1})_{2},\]
and using \thmref{thm:misc} we get that
$\deg{f^*f_*[\mathcal{C}_X]}=\epsilon_0^2+\epsilon_1^2=\epsilon_0^2+d^2$. Moreover,
$c(f^*T_{G})=f^*c(T_G)=1+4f^*\sigma_1+7(f^*\sigma_2+f^*\sigma_{1,1})$
and $f^*\sigma_1=\cOO$,
$f^*\sigma_2=\cOO^2-{\cOO}h+h^2=\cOO^2-{\cOO}h$
and $f^*\sigma_{1,1}=h\cOO$ by \thmref{thm:misc}.

Let $c_1$ denote the first Chern class of $X$. To compute
$c(T_{\mathcal{C}_X})$ we follow the steps of
\remref{rem:chern}. First of all
$\pi^*c(N_{X/\PP^n}^{\vee})=1-4h+\pi^*c_1$. Hence we get that
$c(\pi^*N_{X/\PP^n}^{\vee} \otimes
\mathcal{O}(1))=(1+\cOO)^2+(1+\cOO)(-4h+\pi^*c_1)$. Moreover,
by (\ref{eq:canonical}),
$\cOO^2=-\pi^*c_1(N_{X/\PP^n}^{\vee})\cOO=(4h-\pi^*c_1)\cOO$
and hence $c(\pi^*N_{X/\PP^n}^{\vee} \otimes
\mathcal{O}(1))=1+2\cOO+(-4h+\pi^*c_1)$. This means that
\[\begin{matrix} c(T_{\mathcal{C}_X})=c(\pi^*N_{X/\PP^n}^{\vee} \otimes
\mathcal{O}(1))c(\pi^*T_X)&=&(1+2\cOO-4h+\pi^*c_1)(1+\pi^*c_1)\\
&=& 1+2\cOO-4h+2\pi^*c_1+2{\cOO}\pi^*c_1.\end{matrix}\] Hence
$c(T_{\mathcal{C}_X})^{-1}=1-2\cOO+4h-2\pi^*c_1+2{\cOO}\pi^*c_1.$
It follows that:
\[(c(f^*T_{G})c(T_{\mathcal{C}_X})^{-1})_{2}=((1+4f^*\sigma_1+7(f^*\sigma_2+f^*\sigma_{1,1}))(1-2\cOO+4h-2\pi^*c_1+2{\cOO}\pi^*c_1))_2.\] Multiplying out and using the expressions for $f^*\sigma_2$ and $f^*\sigma_{1,1}$ above we get
\[\begin{matrix}
  (c(f^*T_{G})c(T_{\mathcal{C}_X})^{-1})_{2} &=&
  7f^*\sigma_2+7f^*\sigma_{1,1}+4f^*\sigma_1(-2\cOO+4h-2\pi^*c_1)+2{\cOO}\pi^*c_1
  \\ &=&
  7(\cOO^2-{\cOO}h)+7h\cOO+4\cOO(-2\cOO+4h-2\pi^*c_1)+2{\cOO}\pi^*c_1.
\end{matrix}\]
Simplifying the last expression results in the following formulas:
\[\begin{matrix}
   (c(f^*T_{G})c(T_{\mathcal{C}_X})^{-1})_{2} &=&
  -\cOO^2+16{\cOO}h-6{\cOO}\pi^*c_1 \\ &=&
  -(4h-\pi^*c_1)\cOO+16{\cOO}h-6{\cOO}\pi^*c_1 \\ &=&
  \cOO(12h-5\pi^*c_1).
  \end{matrix}\]
Finally, using \remref{rem:relations},  we get
$\deg{(c(f^*T_{G})c(T_{\mathcal{C}_X})^{-1})_{2}}=12d-5\deg{c_1}=2d+5\deg{p_1},$
since $\deg{p_1}=2d-\deg{c_1}.$ This shows the claim about $\BND{X}$
for a smooth curve $X \subset \PP^3$ in general position.

In the case of a general plane curve $X \subset \PP^2$ we have that
$\deg{c_1}=2-2g$ where $g=(d-1)(d-2)/2$ is the genus of $X$. It
follows that $\deg{p_1}=d^2-d$ and $\epsilon_0=d^2$ and
$\BND{X}=d^4-4d^2+3d$.
\end{proof}

\begin{rem} \label{rem:polys}
The formulas in \cororef{coro:specific-formulas} are given for specific
ambient dimensions $n$. For example, \cororef{coro:specific-formulas}
(\ref{item:space-curve}) is for curves in $\PP^3$ and one may ask if
the same formula is valid for curves in $\PP^4$. For the formulas
given in \cororef{coro:specific-formulas} we have checked that they are
valid for any ambient dimension $n \leq 30$ (excluding the case
$X=\PP^n$). This was done using the Macaulay2 implementation
\cite{bnscript}.

Consider now the general case of a smooth $m$-dimensional variety
$X\subset\PP^n$. Combining \algref{alg:Bmn} and \thmref{thm:misc} we
get an algorithm that, for any given $m$ and $n$, computes the
bottleneck degree of a smooth $m$-dimensional variety $X \subset
\PP^n$ in general position. The result is a formula that expresses the
bottleneck degree in terms of polar classes of $X$. Now, if we let
$n=2m+1$ we get a formula for each $m$. It is our belief that through
projection arguments one can show that this formula is in fact valid
in any ambient dimension $n>m$. Thus we conjecture that the formula in
terms of polar classes only depends on the dimension $m$.
\end{rem}

\section{The affine case}\label{affine}
In this section we define bottlenecks for affine varieties and show
how they may be counted using the bottleneck degrees of projective
varieties.

Let $X \subset \CC^n$ be a smooth affine variety of dimension
$m$. Consider coordinates $x_0,\dots,x_{n-1}$ given by the standard
basis on $\CC^n$ and the usual embedding $\CC^n \subset \PP^n$ with
coordinates $x_0,\dots,x_n$ on $\PP^n$. Let $H_{\infty} = \PP^n
\setminus \CC^n$ be the hyperplane at infinity defined by
$x_n=0$. Also consider the closure $\bar{X} \subset \PP^n$ and the
intersection $\bar{X}_{\infty}=\bar{X} \cap H_{\infty}$. We consider
$\bar{X}_{\infty}$ a subvariety of $\PP^{n-1} \cong H_{\infty}$.

\begin{defn}
A smooth affine variety $X \subset \CC^n$ is in \emph{general
  position} if $\bar{X}_{\infty}$ is smooth and both $\bar{X}$ and
$\bar{X}_{\infty}$ are in general position.
\end{defn}

Assume that $X$ is in general position. Let $\nu:\PP^n \setminus \{o\}
\to H_{\infty}$ be the projection from the point $o=(0,\dots,0,1)$. If
$(p,q) \in \mathcal{C}_{\bar{X}}$ then $q \neq o$ since
$\bar{X}_{\infty}$ is smooth. Also, $p \neq \nu(q)$ since
$\bar{X}_{\infty}$ is in general position. Therefore we can define a
map \[g:\mathcal{C}_{\bar{X}} \rightarrow \gr{2}{n+1} : (p,q) \mapsto
\langle p,\nu(q) \rangle,\] mapping a pair $(p,q) \in
\mathcal{C}_{\bar{X}}$ to the line spanned by $p$ and $\nu(q)$. For
the remainder of this section we will use $g$ to denote this map
associated to a variety $X$. In the following lemma we show that for
$x \in X$ the fiber $F_x=\{(x',q) \in \mathcal{C}_{\bar{X}}:x'=x\}$
together with the map $g$ parameterize lines in the Euclidean normal
space $N_xX$ passing through $x$. Recall that for $x \in X \subset
\CC^n$, $(T_xX)_0$ denotes the embedded tangent space translated to
the origin and the Euclidean normal space at $x$ is given by $N_xX=\{z
\in \CC^n:(z-x) \in (T_xX)_0^{\perp}\}$.
\begin{lemma} \label{lemma:lines-in-normalspace}
Let $X \subset \CC^n$ be a smooth variety in general position. As
above we consider $\CC^n \subset \PP^n$ and $X \subset \bar{X} \subset
\PP^n$. Let $x \in X$ and $F_x=\{(x',q) \in
\mathcal{C}_{\bar{X}}:x'=x\}$. Then the map $u \mapsto g(u) \cap
\CC^n$ on $F_x$ defines a one-to-one correspondence between $F_x$ and
the set of lines in $N_xX$ passing through $x$.
\end{lemma}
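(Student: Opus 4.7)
The plan is to identify both sides of the claimed correspondence with the projective space $\PP((T_xX)_0^\perp)$, which has projective dimension $n-m-1$. By definition of the conormal variety the fiber $F_x$ is isomorphic to $(\mathbb{T}_x\bar X)^\perp \subset \PP^n$, and the lines through $x$ in the affine normal space $N_xX$ correspond bijectively to their direction vectors modulo scalars, i.e.\ to $\PP((T_xX)_0^\perp)$. The content is therefore to match these two descriptions via the map $u\mapsto g(u)\cap\CC^n$.

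The key step is to write $(\mathbb{T}_x\bar X)^\perp$ in coordinates. Pick local defining polynomials $f_1,\dots,f_k$ of $X$ near $x\in\CC^n$, with homogenizations $F_1,\dots,F_k$. The projective tangent space $\mathbb{T}_x\bar X$ is cut out in $\PP^n$ by the linear equations $\sum_{j=0}^n \frac{\partial F_i}{\partial x_j}(x)\,y_j=0$, and since $\frac{\partial F_i}{\partial x_j}(x)=\frac{\partial f_i}{\partial x_j}(x)$ for $j<n$ (with $x_n=1$), Euler's identity yields $\frac{\partial F_i}{\partial x_n}(x)=-\langle x,\nabla f_i(x)\rangle$. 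Taking the projective orthogonal complement of the resulting row span then gives
$$(\mathbb{T}_x\bar X)^\perp = \{\,[v : -\langle x,v\rangle] : v\in (T_xX)_0^\perp\setminus\{0\}\,\},$$
so the projection on the first $n$ coordinates defines a bijection $(\mathbb{T}_x\bar X)^\perp \to \PP((T_xX)_0^\perp)$.

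Next I would make $u\mapsto g(u)\cap\CC^n$ explicit. A direct calculation using $o=(0,\dots,0,1)$ gives $\nu([q_0:\dots:q_n])=[q_0:\dots:q_{n-1}:0]$. Hence for $u=(x,q)\in F_x$ with $q=[v:-\langle x,v\rangle]$ one has $\nu(q)=[v:0]$, and the line $g(u)=\langle x,\nu(q)\rangle\subset\PP^n$ meets $\CC^n$ in the affine line $\{x+\lambda v : \lambda\in\CC\}$. Because $v\in (T_xX)_0^\perp$, this is precisely a line through $x$ contained in $N_xX$, so the map is well defined.

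For bijectivity, the composite $u\mapsto g(u)\cap\CC^n$ factors as $F_x \to \PP((T_xX)_0^\perp) \to \{\text{lines in }N_xX\text{ through }x\}$, where the first arrow is the bijection from the Euler-identity computation and the second is the standard parametrization $[v]\mapsto \{x+\lambda v : \lambda\in\CC\}$ of lines through a point by their directions. Both factors are bijections, so the composite is as well. The only genuinely nontrivial point in the whole argument is the identification $(\mathbb{T}_x\bar X)^\perp \cong \PP((T_xX)_0^\perp)$ via the Euler relation; the rest is bookkeeping with projective coordinates.
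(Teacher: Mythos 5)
Your proof is correct, and its mathematical content is essentially the same as the paper's. Both arguments reduce the claim to a coordinate computation showing that a point $(x,q)\in\mathcal{C}_{\bar X}$ over $x=(x_1,\dots,x_n,1)$ is determined up to scale by the vector $(q_1,\dots,q_n)\in(T_xX)_0^\perp$, with the last coordinate forced to be $q_{n+1}=-\sum_i x_iq_i$; you package this as an explicit parametrization of $(\mathbb{T}_x\bar X)^\perp$ obtained from Euler's identity applied to the homogenized defining equations, whereas the paper derives the same relation from the observation $x\in\mathbb{T}_x\bar X$ (which is itself the Euler identity in disguise) and then checks membership in the conormal variety directly, using a density argument on the affine part of $\mathbb{T}_x\bar X$. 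Your factorization through $\PP((T_xX)_0^\perp)$ is a slightly tidier way of organizing the bijectivity check, but the key step — matching the projective conormal fiber with affine normal directions via the $-\langle x,v\rangle$ last coordinate — is identical in both.
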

\begin{proof}
Let $(x,q) \in \mathcal{C}_{\bar{X}}$ with $q=(q_1,\dots,q_{n+1})$ and
$x=(x_1,\dots,x_n,1)$ where $(x_1,\dots,x_n) \in X \subset \CC^n$. The
line $\langle x, \nu(q) \rangle \cap \CC^n$ expressed in coordinates
on $\CC^n$ is given by $\{(x_1,\dots,x_n)+a(q_1,\dots,q_n):a \in
\CC\}$. To show that this line is normal to $X$ at $x$ we need to show
that $(q_1,\dots,q_n) \in (T_xX)_0^{\perp} \subset \CC^n$. Let
$(v_1,\dots,v_n) \in (T_xX)_0$. Then $(x_1+v_1,\dots,x_n+v_n) \in
T_xX$ where $T_xX \subset \CC^n$ is the embedded tangent space of $X$
at $x$. This means that $(x_1+v_1,\dots,x_n+v_n,1) \in
\mathbb{T}_x\bar{X} \subset \PP^n$ and hence
$\sum_{i=1}^{n}(x_i+v_i)q_i+q_{n+1}=0$. Since $(x_1,\dots,x_n,1) \in
\mathbb{T}_x\bar{X}$ we have that $\sum_{i=1}^{n}x_iq_i+q_{n+1}=0$. It
follows that $\sum_{i=1}^nv_iq_i=0$ and we have shown $(q_1\dots,q_n)
\in (T_xX)_0^{\perp}$.

Now let $x \in X \subset \bar{X}$ with $x=(x_1,\dots,x_n,1)$ and
consider a line in $N_xX$ through $x$. In coordinates on $\CC^n$ the
line is given by $\{(x_1,\dots,x_n)+a(q_1,\dots,q_n):a\in \CC\}$ for
some $0 \neq (q_1,\dots,q_n) \in (T_xX)_0^{\perp} \subseteq
\CC^n$. Note that $(q_1,\dots,q_n)$ is unique up to scaling. We need
to show that there is a unique $q_{n+1} \in \CC$ such that $(x,q) \in
\mathcal{C}_{\bar{X}}$ where $q=(q_1,\dots,q_n,q_{n+1})$. Since $x \in
\mathbb{T}_x\bar{X}$, a necessary condition on $q_{n+1} \in \CC$ is
that $\sum_{i=1}^{n}x_iq_i+q_{n+1}=0$. Accordingly we let
$q_{n+1}=-\sum_{i=1}^nx_iq_i$. It remains to show $(x,q) \in
\mathcal{C}_{\bar{X}}$. Since $\{(v_1,\dots,v_n,v_{n+1}) \in
\mathbb{T}_x\bar{X}:v_{n+1}\neq 0\} \subset \mathbb{T}_x\bar{X}$ is a
dense subset, it is enough to show that for all $(v_1,\dots,v_n,1) \in
\mathbb{T}_x\bar{X}$ we have that $\sum_{i=1}^n v_iq_i +
q_{n+1}=0$. Note that $(v_1,\dots,v_n) \in T_xX \subset \CC^n$ and
$(v_1-x_1,\dots,v_n-x_n) \in (T_xX)_0$. Hence
$\sum_{i=1}^n(v_i-x_i)q_i=0$. It follows that $\sum_{i=1}^n v_iq_i +
q_{n+1}=\sum_{i=1}^nx_iq_i+q_{n+1}=0$.
\end{proof}

Consider the projection $\mathfrak{p}: \mathcal{C}_{\bar{X}} \to
\bar{X}$. A \emph{bottleneck} of the affine variety $X$ is a pair of
distinct points $x,y \in X \subset \bar{X}$ such that there exists
$u,v \in \mathcal{C}_{\bar{X}}$ with $\mathfrak{p}(u)=x$,
$\mathfrak{p}(v)=y$ and $g(u)=g(v)$. We will now show that this
definition of bottlenecks is equivalent to the definition given in
\secref{sec:intro} in terms of Euclidean normal spaces.
\begin{lemma}
Let $X \subset \CC^n$ be a smooth variety in general position. A pair
of distinct points $(x,y) \in X \times X$ is a bottleneck if and only
if the line in $\CC^n$ joining $x$ and $y$ is contained in $N_xX \cap
N_yX$.
\end{lemma}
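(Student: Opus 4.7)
The plan is to reduce both directions of the equivalence to \lemmaref{lemma:lines-in-normalspace}, which already identifies the fiber $F_x = \mathfrak{p}^{-1}(x) \subset \mathcal{C}_{\bar{X}}$ with the set of lines in $N_xX$ through $x$ via the map $u \mapsto g(u) \cap \CC^n$. Once that bijection is available, the statement becomes a matter of unwinding the definition of bottleneck of the affine variety $X$.

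First I would take $(x,y)$ to be a bottleneck of $X$, so that there exist $u \in F_x$ and $v \in F_y$ with $g(u) = g(v)$. Applying \lemmaref{lemma:lines-in-normalspace} at $x$ produces an affine line $L := g(u) \cap \CC^n$ contained in $N_xX$ and passing through $x$; applied at $y$ it produces $g(v) \cap \CC^n \subseteq N_yX$ passing through $y$. Because $g(u) = g(v)$ as projective lines, the two affine lines coincide and equal $L$, so $L \subseteq N_xX \cap N_yX$. Since $L$ contains the two distinct points $x$ and $y$, it is the unique affine line joining them.

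For the converse, suppose the line $L \subset \CC^n$ joining $x$ and $y$ lies in $N_xX \cap N_yX$. Since $L$ is a line in $N_xX$ through $x$, \lemmaref{lemma:lines-in-normalspace} yields a (unique) $u \in F_x$ with $g(u) \cap \CC^n = L$; applied at $y$ it yields $v \in F_y$ with $g(v) \cap \CC^n = L$. The projective lines $g(u)$ and $g(v)$ are both the projective closure of $L$ inside $\PP^n$ and are therefore equal, which means $(x,y)$ satisfies the definition of a bottleneck.

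The only small points requiring attention are that $g(u)$ is genuinely a projective line (not a single point) and is not contained in $H_{\infty}$: the former is guaranteed because $p \neq \nu(q)$ by the general-position assumption on $\bar{X}_{\infty}$ already invoked at the very definition of $g$, and the latter because $x = \mathfrak{p}(u) \in \CC^n$ lies on $g(u)$. Thus $g(u) \cap \CC^n$ is always a bona fide $1$-dimensional affine line, matching the normal-space line produced by \lemmaref{lemma:lines-in-normalspace}, and I do not anticipate any substantive obstacle beyond these routine bookkeeping checks.
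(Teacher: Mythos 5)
Your proof is correct and follows essentially the same route as the paper's: both directions reduce immediately to the bijection of \lemmaref{lemma:lines-in-normalspace} between the fiber $F_x$ and the lines in $N_xX$ through $x$, with the converse using the fact that the projective closures $g(u)$ and $g(v)$ of the common affine line must coincide. The extra bookkeeping remarks at the end are harmless but not needed, as these facts are already baked into \lemmaref{lemma:lines-in-normalspace} and the definition of $g$.
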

\begin{proof}
If $(x,y) \in X \times X$ is a bottleneck, then there are $u,v \in
\mathcal{C}_{\bar{X}}$ with $g(u)=g(v)$, $\mathfrak{p}(u)=x$ and
$\mathfrak{p}(v)=y$. The line $g(u) \cap \CC^n$ in $\CC^n$ thus
contains $x$ and $y$ and it is contained in $N_xX \cap N_yX$ by
\lemmaref{lemma:lines-in-normalspace}. For the converse, let $x,y \in
X$ be distinct such that the line $l \subset \CC^n$ joining $x$ and
$y$ is contained in $N_xX \cap N_yX$. By
\lemmaref{lemma:lines-in-normalspace} there are $u,v \in
\mathcal{C}_{\bar{X}}$ with $l=g(u)\cap \CC^n$, $l=g(v) \cap \CC^n$,
$\mathfrak{p}(u)=x$ and $\mathfrak{p}(v)=y$. It follows that
$g(u)=g(v)$ and $(x,y)$ is a bottleneck.
\end{proof}

The map $g$ can have double points that do not correspond to actual
bottlenecks of $X$ since we require that $x,y \in X$ lie in the affine
part. Note however that if $u,v \in \mathcal{C}_{\bar{X}}$ with
$g(u)=g(v)$ and $\mathfrak{p}(u) \in \bar{X}_{\infty}$, then
$\mathfrak{p}(v) \in \bar{X}_{\infty}$ as well. Therefore the
extraneous double point pairs of $g$ are in one-to-one correspondence
with double point pairs of the
map \[g_{\infty}:\mathcal{C}_{\bar{X}_{\infty}} \rightarrow \gr{2}{n}
: (p,q) \mapsto \langle p,q \rangle.\] Here
$\mathcal{C}_{\bar{X}_{\infty}}$ is defined with respect to the
embedding $\bar{X}_{\infty} \subset \PP^{n-1}$. This leads us to
consider the double point classes $\mathbb{D}(g)$,
$\mathbb{D}(g_{\infty})$ of $g$ and $g_{\infty}$ and define the
bottleneck degree of $X$ as the difference of the degrees of these
classes.

\begin{defn}
Let $X \subset \CC^n$ be a smooth variety in general position. The
\emph{bottleneck degree} of $X$ is
$\deg{\mathbb{D}(g)}-\deg{\mathbb{D}(g_{\infty})}$ and is denoted by
$\BND{X}$.
\end{defn}

\begin{ex} \label{ex:affine-plane-curve}
Consider a general plane curve $X \subset \CC^2$ of degree $d$ defined
by a polynomial $F \in \CC[x,y]$. Then $\bar{X} \subset \PP^2$ is
defined by the homogenization $\bar{F} \in \CC[x,y,z]$ of $F$ with
respect to $z$. We may assume that $\bar{X}$ is smooth. The map
$g:\bar{X} \to (\PP^2)^*$ is given by $(x,y,z) \mapsto
(-z\bar{F}_y,z\bar{F}_x,x\bar{F}_y-y\bar{F}_x)$. It maps a point $p
\in X$ to the closure of the normal line $N_pX \subset \CC^2$ in
$\PP^2$. The bottlenecks of $X$ are the pairs $(p,q) \in X \times X$
with $p\neq q$ and $N_pX=N_qX$. We shall now consider the other double
point pairs of $g$, that is distinct points $p,q \in \bar{X}$ such
that $g(p)=g(q)$ and $p$ or $q$ lies on the line at infinity
$H_{\infty}$. The latter corresponds to the point $(0,0,1) \in
(\PP^2)^*$. If $p = (x,y,z) \in H_{\infty} \cap \bar{X}$, that is
$z=0$, then $g(p)=(0,0,1)$. Conversely, if $q \in \bar{X}$ and $g(q) =
(0,0,1)$ then $q \in H_{\infty}$ since $q$ is a point on the line
$g(q)$.
The extraneous double points of $g$ are thus exactly the $d$ points
$\bar{X}_{\infty}$, the intersection of $\bar{X}$ with the line at
infinity. This gives rise to $d(d-1)$ extraneous double point pairs at
infinity.
\end{ex}

\begin{prop} \label{prop:affine}
For a smooth affine variety $X \subset \CC^n$ in general
position, \[\BND{X}=\BND{\bar{X}}-\BND{\bar{X}_{\infty}}.\]
\end{prop}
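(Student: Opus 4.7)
The plan is to reduce the proposition to the identity $\deg \mathbb{D}(g)=\BND{\bar{X}}$, and then establish that identity by deforming $g$ to $f_{\bar{X}}$ through a $\PP^1$-family of maps $\mathcal{C}_{\bar{X}}\to G$. By construction the map $g_\infty:\mathcal{C}_{\bar{X}_\infty}\to\gr{2}{n}$ is literally the map used in \defref{def:bndegree} to define $\BND{\bar{X}_\infty}$, so $\deg \mathbb{D}(g_\infty)=\BND{\bar{X}_\infty}$ is immediate. The claim $\BND{X}=\BND{\bar{X}}-\BND{\bar{X}_\infty}$ is therefore equivalent to $\deg \mathbb{D}(g)=\deg \mathbb{D}(f_{\bar{X}})$.

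Setting $o=(0,\ldots,0,1)$ and using the identity $\nu(q)=q-q_{n+1}o$, I would introduce the rational map
\[
H:\mathcal{C}_{\bar{X}}\times\PP^1\dashrightarrow G,\qquad \bigl((p,q),[\alpha:\beta]\bigr)\longmapsto \bigl\langle p,\;\beta q-\alpha q_{n+1}o\bigr\rangle,
\]
which specializes at $[\alpha:\beta]=[0:1]$ to $f_{\bar{X}}$ and at $[\alpha:\beta]=[1:1]$ to $g$. The indeterminacy locus $I\subset\mathcal{C}_{\bar{X}}\times\PP^1$ consists of points where either $\beta q-\alpha q_{n+1}o=0$ (which, since $q\neq o$, forces $[\alpha:\beta]=[1:0]$), or where $p$ is proportional to $\beta q-\alpha q_{n+1}o$. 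The latter at $[0:1]$ reduces to $p\propto q$, ruled out because $\bar{X}$ is in general position; at $[1:1]$ it reduces to $p\propto\nu(q)$, ruled out because $\bar{X}_\infty$ is in general position. Hence $I$ has empty fiber over each of $[0:1]$ and $[1:1]$.

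Since $\mathcal{C}_{\bar{X}}$ is proper, the image of $I$ under the second projection is closed in $\PP^1$, and as it avoids two points it must be a finite set $S$. On the connected open $U:=\PP^1\setminus S$, which contains both $[0:1]$ and $[1:1]$, the restriction $H|_{\mathcal{C}_{\bar{X}}\times U}$ is a morphism. By $\mathbb{A}^1$-homotopy invariance of Chow groups, any two fibers of $\mathcal{C}_{\bar{X}}\times U\to U$ represent the same class in $A_*(\mathcal{C}_{\bar{X}}\times U)$, and pushing/pulling through $H$ yields $f_{\bar{X},*}=g_*$ on $A_*(\mathcal{C}_{\bar{X}})$ and $f_{\bar{X}}^*=g^*$ on $A^*(G)$. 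Inserting these identities into the double point formula
\[
\mathbb{D}(h)=h^*h_*[\mathcal{C}_{\bar{X}}]-\bigl(c(h^*T_G)\,c(T_{\mathcal{C}_{\bar{X}}})^{-1}\bigr)_{n-1}
\]
for $h=f_{\bar{X}}$ and $h=g$ produces $\mathbb{D}(g)=\mathbb{D}(f_{\bar{X}})$ in $A_0(\mathcal{C}_{\bar{X}})$, so $\deg \mathbb{D}(g)=\BND{\bar{X}}$.

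The main obstacle is precisely the indeterminacy analysis above: $H$ is only a rational map in general, and the argument hinges on both general position hypotheses to guarantee that $H$ specializes to genuine morphisms at the two relevant values of the $\PP^1$-parameter and that the bad locus in $\PP^1$ is thereby finite. Once this is in place, subtracting the infinity contribution gives $\BND{X}=\deg \mathbb{D}(g)-\deg \mathbb{D}(g_\infty)=\BND{\bar{X}}-\BND{\bar{X}_\infty}$.
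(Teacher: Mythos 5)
Your reduction to $\deg\mathbb{D}(g)=\deg\mathbb{D}(f)$ (with $f=f_{\bar X}$) is exactly the paper's, as is the observation that $g_\infty$ is literally the map defining $\BND{\bar X_\infty}$. From there you take a genuinely different route. The paper factors both $f$ and $g$ through the blow-up $\Bl{\Delta}{\PP^n\times\PP^n}$: $f=\gamma\circ i$ and $g=\gamma\circ j$, where $i,j$ are two embeddings of $\mathcal{C}_{\bar X}$ into the blow-up (the second via $(p,q)\mapsto(p,\nu(q))$), and then uses Johnson's formula for $\gamma^*\sigma_{a,b}$ together with $i^*[E]=j^*[E]=0$ (this is where both general-position hypotheses enter) to get $f^*\sigma_{a,b}=g^*\sigma_{a,b}$ by explicit computation; the equality $f_*[\mathcal{C}_{\bar X}]=g_*[\mathcal{C}_{\bar X}]$ then follows because the Schubert coefficients $e_i$, $e_i'$ are computed by the projection formula from $f^*\sigma_{n-1-i,i}=g^*\sigma_{n-1-i,i}$. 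You instead deform $f$ to $g$ through the $\PP^1$-family $H$, and your analysis of the indeterminacy locus is correct and uses the two general-position hypotheses at precisely the same points the paper does. This is an appealing and more conceptual way to see \emph{why} $f$ and $g$ behave the same, and it avoids appealing to Johnson's explicit pullback formula.

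The gap is in the passage from the geometric deformation to the Chow-theoretic equalities. You invoke ``$\mathbb{A}^1$-homotopy invariance of Chow groups,'' but your family lives over $U=\PP^1\setminus S$, which is a proper open subset of $\mathbb{A}^1$ once $S\neq\emptyset$; the literal statement (flat pullback along $X\times\mathbb{A}^1\to X$ is an isomorphism, forcing all slice Gysin maps $i_t^*$ to agree) does not apply verbatim. It can be repaired — e.g.\ note that $[0:1]\sim[1:1]$ in $A_0(U)$ via the rational function $\alpha/(\alpha-\beta)$, which is a unit on $S$, and then use compatibility of the refined Gysin homomorphisms $i_0^!,i_1^!$ with rational equivalence in the base — but as written the step is not justified. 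More seriously, the phrase ``pushing/pulling through $H$ yields $f_{\bar X,*}=g_*$'' is not a consequence of homotopy invariance for Chow groups; pushforward along a non-proper family does not transfer this way. What you actually need is only $f_*[\mathcal{C}_{\bar X}]=g_*[\mathcal{C}_{\bar X}]$, and the clean way to get it (as in the paper) is to deduce it from $f^*=g^*$ via the projection formula applied to the Schubert coefficients, rather than trying to extract it from the homotopy directly. With those two repairs your argument closes, and it is a legitimately different — and arguably more illuminating — proof than the paper's.
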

\begin{proof}
By definition $\BND{\bar{X}_{\infty}}=\deg{\mathbb{D}(g_{\infty})}$
and so it remains to prove that
$\BND{\bar{X}}=\deg{\mathbb{D}(g)}$. In other words we need to show
that $\deg{\mathbb{D}(f)}=\deg{\mathbb{D}(g)}$ where $f$ is the
map \[f:\mathcal{C}_{\bar{X}} \rightarrow \gr{2}{n+1} : (p,q) \mapsto
\langle p,q \rangle.\] By the double point formula it is enough to
show that
$f^*f_*[\mathcal{C}_{\bar{X}}]=g^*g_*[\mathcal{C}_{\bar{X}}]$ and
$c(f^*T_{G})=c(g^*T_{G})$ where $G=\gr{2}{n+1}$. Since the Schubert
classes generate $A_*(G)$ as a group the equality
$c(f^*T_{G})=c(g^*T_{G})$ would follow after showing that
$f^*\sigma_{a,b}=g^*\sigma_{a,b}$. We will do this first. As in the
proof of \thmref{thm:misc}, let $bl: \Bl{\Delta}{\PP^n \times
  \PP^n} \to \PP^n \times \PP^n$ be the blow-up of $\PP^n \times
\PP^n$ along the diagonal $\Delta \subset \PP^n \times \PP^n$ and let
$E = bl^{-1}(\Delta)$. Let $\alpha, \beta \in A_*(\PP^n \times
\PP^n)$ be the pullbacks of the hyperplane class of $\PP^n$ under the
two projections and let $\gamma$ be as in \thmref{thm:misc}. By
(\ref{eq:pullback}) \[\gamma^*(\sigma_{a,b}) =\sum_{i=0}^{a-b}
bl^*\alpha^{b+i}bl^*\beta^{a-i} + R,\] where $R=[E]\cdot \delta$ for
some $\delta \in A_*(\Bl{\Delta}{\PP^n \times \PP^n})$. Let
$i:\mathcal{C}_{\bar{X}} \to \Bl{\Delta}{\PP^n \times \PP^n}$ be the
map induced by the inclusion $\mathcal{C}_{\bar{X}} \subset \PP^n
\times \PP^n$ and let $j:\mathcal{C}_{\bar{X}} \to \Bl{\Delta}{\PP^n
  \times \PP^n}$ be induced by the map $\mathcal{C}_{\bar{X}} \to
\PP^n \times \PP^n:(p,q) \mapsto (p,\nu(q))$, where $\nu:\PP^n
\setminus \{o\} \to H_{\infty}$ is the linear projection. Note that
$f=\gamma \circ i$ and $g=\gamma \circ j$. The map $bl\circ i$ is
the identity on $\mathcal{C}_{\bar{X}}$ and $bl \circ
j:\mathcal{C}_{\bar{X}} \to \PP^n \times \PP^n$ is the map $(p,q)
\mapsto (p,\nu(q))$. It follows that $i^*bl^*\alpha = j^*bl^*\alpha$
and $i^*bl^*\beta = j^*bl^*\beta$. Since $\bar{X}$ and
$\bar{X}_{\infty}$ are in general position, $i^*R=j^*R=0$, and we
conclude that $f^*\sigma_{a,b}=g^*\sigma_{a,b}$.

Now write $f_*[\mathcal{C}_{\bar{X}}]=\sum_i e_i\sigma_{n-1-i,i}$ and
$g_*[\mathcal{C}_{\bar{X}}]=\sum_i e_i'\sigma_{n-1-i,i}$ where
$e_i,e_i' \in \ZZ$. Note that $e_i=\deg{f_*[\mathcal{C}_{\bar{X}}]
  \cdot \sigma_{n-1-i,i}}=\deg{[\mathcal{C}_{\bar{X}}] \cdot
  f^*\sigma_{n-1-i,i}}$ and the same way
$e_i'=\deg{[\mathcal{C}_{\bar{X}}] \cdot g^*\sigma_{n-1-i,i}}$. Since
$f^*\sigma_{n-1-i,i}=g^*\sigma_{n-1-i,i}$, we have that $e_i=e_i'$ for
all $i$. It follows that \[f^*f_*[\mathcal{C}_{\bar{X}}]=\sum_i
e_if^*\sigma_{n-1-i,i}=\sum_i
e_i'g^*\sigma_{n-1-i,i}=g^*g_*[\mathcal{C}_{\bar{X}}].\]
\end{proof}

\begin{ex} \label{ex:affine-plane-curve2}
  For a general curve $X \subset \CC^2$ of degree $d$ we
  have \[\BND{X}=d^4-5d^2+4d.\] Namely, the bottleneck degree of
  $\bar{X}$ is given in \cororef{coro:specific-formulas} and putting this
  together with \propref{prop:affine} and
  \exref{ex:affine-plane-curve} we get $\BND{X}=d^4-4d^2+3d -
  d(d-1)=d^4-5d^2+4d$.

 Hence a general line in $\CC^2$ has no bottlenecks, as one might
 expect. For $d=2$ we get that a general conic has 4 bottlenecks,
 which corresponds to 2 pairs of points with coinciding normal
 lines. These lines can be real: Consider the case of a general real
 ellipse and its two principal axes, see \figref{fig:ellipse}.
\begin{figure}[htb]
  \centering
  \includegraphics[trim={0pt 0pt 0pt 0pt},clip,width=150pt]{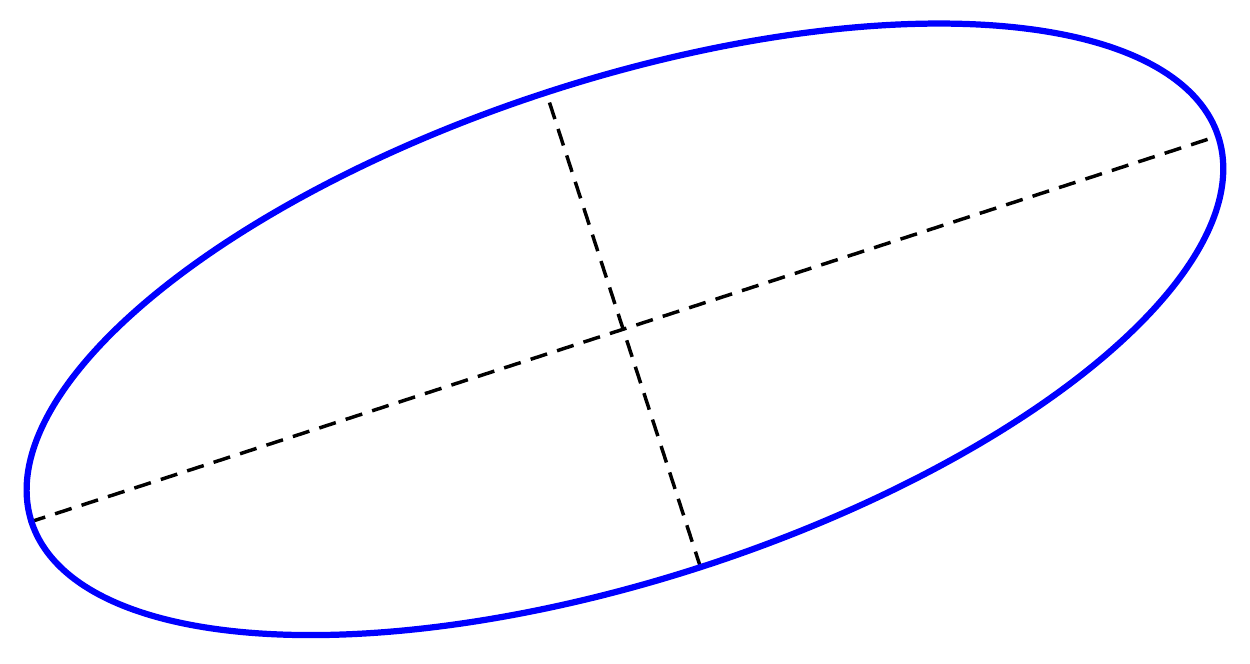}
  \caption{\label{fig:ellipse} Ellipse with principal axes.}
\end{figure}
 
 \end{ex}

\begin{rem}
Let $X$ be a smooth affine variety in general position. As we have
seen, the bottleneck degrees of $\bar{X}$ and $\bar{X}_{\infty}$ are
functions of the polar numbers of these varieties. If
$i:\bar{X}_{\infty} \to \bar{X}$ is the inclusion then the relation
between the polar classes of $\bar{X}$ and those of its hyperplane
section $\bar{X}_{\infty}$ is
$p_j(\bar{X}_{\infty})=i^*p_j(\bar{X})$. This is straightforward to
verify using for example the adjunction formula \cite[Example
3.2.12]{F98}  and the relation between polar classes and Chern classes
(\ref{eq:polar-chern}). This means that the polar numbers of $\bar{X}$
and $\bar{X}_{\infty}$ are the same in the sense that
$\deg{p_1(\bar{X}_{\infty})^{a_1} \dots
  p_{m-1}(\bar{X}_\infty)^{a_{m-1}}}=\deg{p_1(\bar{X})^{a_1} \dots
  p_{m-1}(\bar{X})^{a_{m-1}}}$ for any $a_1,\dots,a_{m-1} \in \NN$
such that $\sum_{j=1}^{m-1} j\cdot a_j \leq m-1$. As a consequence,
\propref{prop:affine} may be used  to express the bottleneck degree
of $X \subset \CC^n$ in terms of the polar numbers of its closure
$\bar{X} \subset \PP^n$.
\end{rem}

\begin{rem} \label{rem:homotopy}
Let $g_1,\dots,g_k \in \CC[x_1,\dots,x_n]$ be a system of polynomials
of degrees $d_1,\dots,d_k$ which define a complete intersection $X
\subset \CC^n$. Suppose that the bottlenecks of $X$ are known. If $X$
is general enough to have the maximal number of bottlenecks, we may
compute the isolated bottlenecks of any other complete intersection $Y
\subset \CC^n$ defined by polynomials $f_1,\dots,f_k \in
\CC[x_1,\dots,x_n]$ of the same degrees $d_1,\dots,d_k$. We propose to
do this by a parameter homotopy from $X$ to $Y$. For background on
homotopy methods see for example \cite{SW05}. Suppose that both $X$
and $Y$ are smooth. Let $h_i(x)=(1-t)f_i(x)+\gamma tg_i(x)$ where
$\gamma \in \CC$ is random and $t$ is the homotopy parameter. The
homotopy paths are tracked from the bottlenecks of $X$ at $t=1$ to the
bottlenecks at $Y$ at $t=0$. Introduce new variables $y_1,\dots,y_n$
and $\lambda_1,\dots,\lambda_k,\mu_1,\dots,\mu_k$. The parameter
homotopy is then the following square system of equations in $2(n+k)$
variables:
\[
\begin{array}{l}
  h_1(x)=\dots=h_k(x)=0, \\
  h_1(y)=\dots=h_k(y)=0, \\
  y-x=\sum_{i=1}^k \lambda_i \nabla h_i(x), \\
  y-x=\sum_{i=1}^k \mu_i \nabla h_i(y). \\
\end{array}
\]
For the starting points of the homotopy we need the bottleneck pairs
$(x,y)$ of $X$. To find the $\lambda_1,\dots,\lambda_k$ and
$\mu_1,\dots,\mu_k$ corresponding to a bottleneck pair $(x,y)$ one
would need to solve the linear systems $y-x=\sum_{i=1}^k \lambda_i
\nabla g_i(x)$ and $y-x=\sum_{i=1}^k \mu_i \nabla g_i(y)=0$.

Along similar lines, \cite{E18} presents an efficient homotopy to
compute bottlenecks of affine varieties.
\end{rem}

\section{Examples} \label{sec:examples}

\begin{ex}
Consider the space curve in $\mathbb{C}^3$ given by the intersection
of these two hypersurfaces:
\[   x^3-3xy^2-z = 0\]
\[ x^2 + y^2 + 3z^2 - 1=0 .\]

\begin{figure}[htb]
  \centering
  \includegraphics[trim={0pt 50pt 0pt 60pt},clip,width=150pt]{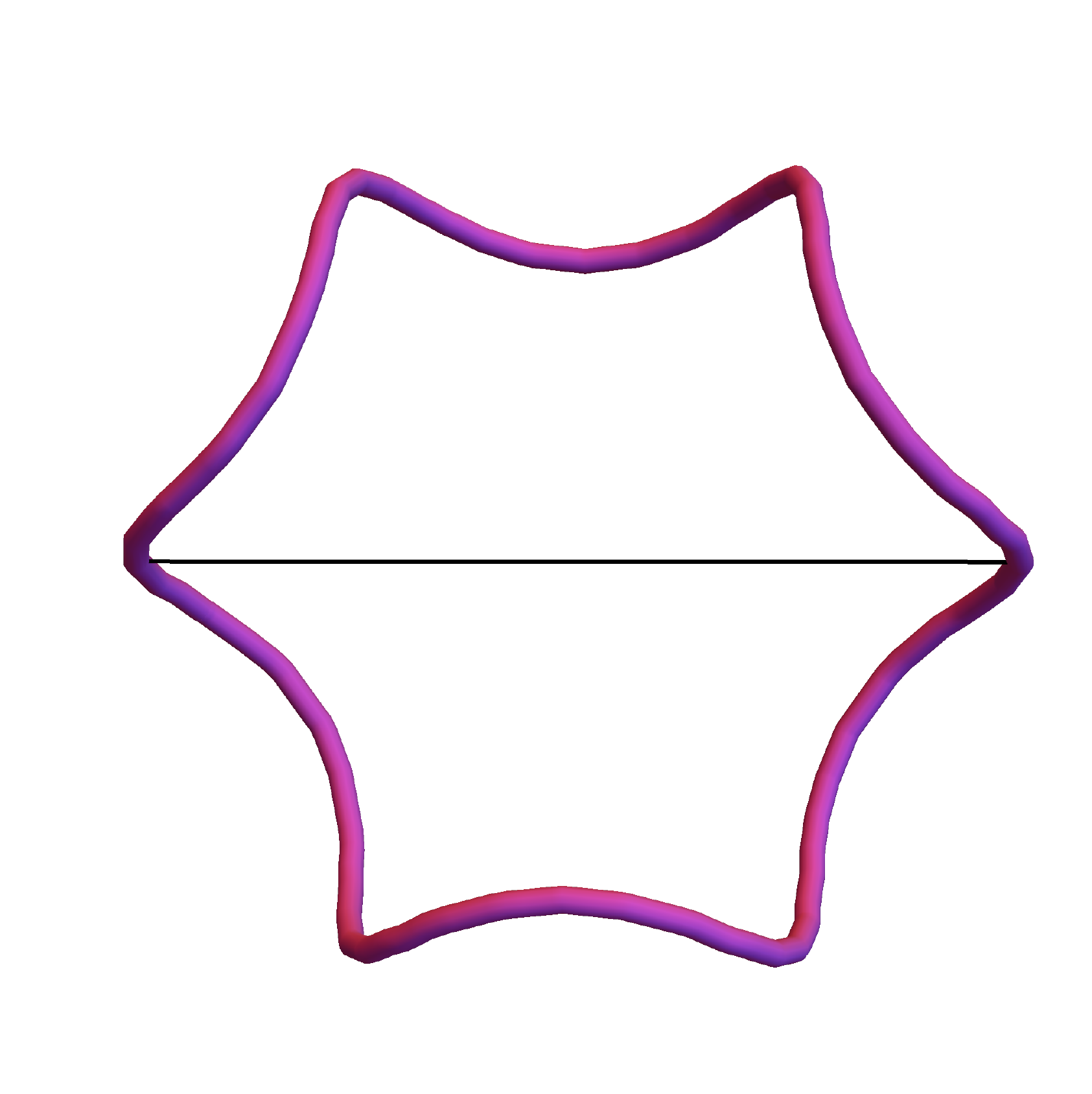}
  \caption{\label{fig:spacecurve} A space curve of degree 6 shown with
    one of its bottleneck lines joining the points $(0,-1,0)$ and $(0,1,0)$.}
\end{figure}

As computed in Macaulay2, the ideal of the bottleneck variety (with
the diagonal removed) associated to this affine curve has dimension 0
and degree 480. The curve is the complete intersection of two surfaces
of degrees $d_1=2$ and $d_2=3$.

Now consider a smooth complete intersection curve $X \subset \CC^3$
cut out by surfaces of degree $d_1$ and $d_2$. Assume that $X$ is in
general position. By  \cororef{coro:specific-formulas}  the
bottleneck degree of $\bar{X}$ is given by
$\epsilon_0^2+d^2-5\deg{p_1}-2d$, where $\epsilon_0=d+\deg{p_1}$ and
$d=d_1d_2$. Using for example the adjunction formula, \cite[Example 3.2.12]{F98}, one can see that $c_1(T_X)=(4-d_1-d_2)h$, where $h \in
A_0(X)$ is the hyperplane class. Also, by (\ref{eq:polar-chern}) we
have $p_1 = 2h-c_1(T_X)=(d_1+d_2-2)h$. Thus
$\deg{p_1}=(d_1+d_2-2)d_1d_2$. By Proposition \ref{prop:affine}, to
obtain the bottleneck degree of the affine variety $X$ we subtract
$\BND{\bar{X}_{\infty}}$ from $\BND{\bar{X}}$. In this case, we have
\[ \BND{\bar{X}_{\infty}}=d_1d_2(d_1d_2-1). \] 
We obtain the following formula for the bottleneck degree of a smooth
complete intersection curve $X \subset \CC^3$ in general position:
\[ \BND{X}=d_1^4d_2^2+2d_1^3d_2^3+d_1^2d_2^4-2d_1^3d_2^2-2d_1^2d_2^3+d_1^2d_2^2-5d_1^2d_2-5d_1d_2^2+9d_1d_2.  \] 
Substituting $d_1=2$ and $d_2=3$, we obtain $\BND{X}=480$, in
agreement with the Macaulay2 computation for the sextic curve above.
\end{ex}

\begin{ex} \label{ex:surfaces}
Let $X \subset \CC^3$ be a general surface of degree
$d$. Then \[\BND{X} = d^6-2d^5+3d^4-15d^3+26d^2-13d.\] To see this use
\propref{prop:affine}. Apply \cororef{coro:specific-formulas} to get
$\BND{\bar{X}}$ and the bottleneck degree of the planar curve
$\bar{X}_{\infty}$. 
\end{ex}

\begin{ex}\label{ex:qsurface}
Consider the quartic surface $X \subset \mathbb{C}^3$ defined by the equation 
\[ (0.3x^2 + 0.5z + 0.3x + 1.2y^2 - 1.1)^2 +
           (0.7(y - 0.5x)^2 + y + 1.2z^2 - 1)^2 = 0.3.\] For a general
quartic surface in $\mathbb{C}^3$, the bottleneck degree is 2220 by
\exref{ex:surfaces}. In this case, $\BND{X}=1390$ was found using the
Julia package \emph{HomotopyContinuation.jl}
\cite{BreidingTimme}. Among the $1390$ solutions are $49$ distinct
real bottleneck pairs. The quartic with its bottlenecks is shown in
\figref{fig:quarticsurf}.

\end{ex}

\begin{figure}[htb]
  \centering
  \includegraphics[trim={0pt 250pt 0pt 250pt},clip,width=150pt]{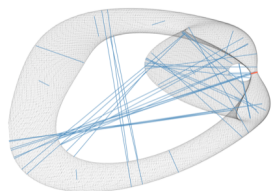}
  \caption{\label{fig:quarticsurf} The quartic surface of Example \ref{ex:qsurface} shown with
    its real bottleneck lines. The shortest bottleneck line is shown
    in red. The figure was produced by Sascha Timme using the Julia
    package \emph{HomotopyContinuation.jl} \cite{BreidingTimme}.}
\end{figure}

\begin{ex}
Consider the ellipsoid $X \subset \mathbb{C}^3$ defined by the equation 
\[ 36x^2+9y^2+4z^2=36.   \]
For a general quadric surface in $\mathbb{C}^3$, the bottleneck degree
is 6 by \exref{ex:surfaces}.  In this case, there are indeed three
bottleneck pairs, all with real coordinates.  The pairs occur on each
of the coordinate axes, at $\{ (-1,0,0), (1,0,0) \}$, $\{ (0,-2,0),
(0,2,0) \}$ and $\{ (0,0,-3),(0,0,3) \}$.

If we set two axes to be the same length, as in the equation of the spheroid
\[4x^2+y^2+z^2=4,\]
then there is only one isolated bottleneck pair: $\{ (-1,0,0), (1,0,0)
\}$. The rest of the bottlenecks are part of an infinite
locus. Intersecting with the plane $\{ x=0 \}$ which is normal to the
spheroid, we obtain the circle $\{ y^2+z^2=4 \}$ and every antipodal
pair of points of the circle is a bottleneck.
\end{ex}

\bibliographystyle{plain} \bibliography{collection}

\end{document}